\documentclass{article}
\usepackage{amsthm, amsfonts, amssymb, latexsym, color, tikz}
\usepackage{hyperref}
\usepackage{lipsum}
\usepackage{fixme}
\fxsetup{
    status=draft,
    author=,
    layout=margin,
    theme=color
}

\newcommand{\E}{\mathsf{E}}
\newcommand{\F}{\mathbb{F}}

\newcommand{\R}{\mathbb{R}}
\usepackage{amsmath}
\usepackage[a4paper, total={6in, 8in}]{geometry}
\setlength{\parindent}{0pt}
\setlength{\parskip}{0.7em}

\title{  On sum sets of convex functions}

\newcommand{\Addresses}{{
  \bigskip
  \footnotesize
   S. Stevens, \textsc{Johann Radon Institute for Computational and Applied Mathematics (RICAM), Linz, Austria}\par\nopagebreak
  \textit{E-mail address}: \texttt{sophie.stevens@oeaw.ac.at}
  
  \medskip

   A. Warren, \textsc{Johann Radon Institute for Computational and Applied Mathematics (RICAM), Linz, Austria}\par\nopagebreak
  \textit{E-mail address}: \texttt{audie.warren@oeaw.ac.at}

}}

\author{Sophie Stevens and Audie Warren}
\newtheorem{proposition}{Proposition}
\newtheorem*{proposition*}{Proposition}

\newtheorem{lemma}{Lemma}
\newtheorem*{corollary*}{Corollary 5}

\newtheorem{theorem}{Theorem}
\newtheorem{corollary}{Corollary}
\newtheorem{conjecture}{Conjecture}

\begin{document}

\maketitle
\begin{abstract}
    In this paper we prove new bounds for sums of convex or concave functions. Specifically, we prove that for all $A,B \subseteq \mathbb R$ finite sets, and for all $f,g$ convex or concave functions, we have
    $$|A + B|^{38}|f(A) + g(B)|^{38} \gtrsim |A|^{49}|B|^{49}.$$

    This result can be used to obtain bounds on a number of two-variable expanders of interest, as well as to the asymmetric sum-product problem. We also adjust our technique to also prove the three-variable expansion result
    \[
    |AB+A|\gtrsim |A|^{\frac32 +\frac3{170}}\,.
    \]
    
    Our methods follow a series of recent developments in the sum-product literature, presenting a unified picture. Of particular interest is an adaptation of a regularisation technique of Xue \cite{xue}, originating in \cite{rss}, that enables us to find positive proportion subsets with certain desirable properties. 

\end{abstract}
\section*{Introduction}
Given finite sets $A$ and $B$ of real numbers, the \emph{sum set} and \emph{product set} of $A$ and $B$ are defined as
$$A+B = \{ a+b : a\in A, b\in B \}, \quad AB = \{ ab : a\in A, b\in B \}.$$
Erd\H{o}s and Szemer\'{e}di conjectured that at least one of $|A+A|$ or $|AA|$ is large with respect to $|A|$. Specifically, they conjectured the following.\footnotemark
\begin{conjecture}[Erd\H{o}s - Szemer\'{e}di] \label{conj:erdsze}
For all $A \subseteq \mathbb Z$ a finite set, and for all $\epsilon > 0$, we have
$$|AA| + |A+A| \gg |A|^{2 - \epsilon}.$$
\end{conjecture}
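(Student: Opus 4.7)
The plan is to leverage the central theme of this paper---bounds for sum sets of convex functions---to attack the Erd\H{o}s--Szemer\'edi conjecture by viewing multiplication through a logarithmic lens. Since the logarithm is strictly concave on the positive reals, products in $A$ correspond to sums of a concave function applied to $\log A$, so any sufficiently strong convexity bound for $|f(A)+g(B)|$ should translate into a lower bound on $|AA|$ in terms of $|A+A|$. Concretely, I would set $B=A$, $f(x)=g(x)=\exp(x)$, and apply a convexity estimate to the set $\log A$; the stated exponent $49/38$ from the abstract is, however, far below the $2-\epsilon$ threshold needed, so the conjecture cannot be obtained by a direct substitution.

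The real task is to strengthen the convex-function inequality of this paper to one of the form
\[
|A+B|^{1+o(1)}\,|f(A)+g(B)|^{1+o(1)} \gtrsim |A|^{2}|B|^{2},
\]
from which Conjecture~\ref{conj:erdsze} follows by the substitution above. To attempt this I would combine three ingredients: (i) the Szemer\'edi--Trotter incidence theorem applied to the family of translated graphs $\{y=f(x)+c\}$, in the spirit of Elekes--Nathanson--Ruzsa; (ii) a fine energy decomposition isolating rich additive structure, exploited via Balog--Szemer\'edi--Gowers and Pl\"unnecke--Ruzsa; and (iii) iterated regularisation in the spirit of Xue's technique cited in the abstract, passing to positive-proportion subsets on which the incidence bounds above become tight in both the number of rich lines and the richness parameter simultaneously. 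The hope is to drive the polynomial losses (currently giving exponents like $49/38$) down to $|A|^{o(1)}$ by running the regularisation-then-incidence loop a super-constant number of times, tracking the total loss as a telescoping product rather than as a single-step application.

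The main obstacle, and the reason the conjecture has resisted attack since 1983, is that every known incidence-based argument loses a fixed polynomial factor at the Szemer\'edi--Trotter step: the bound $I(P,\mathcal L)\ll (|P||\mathcal L|)^{2/3}+|P|+|\mathcal L|$ is sharp, and no regularisation of $A$ alone can erase the $2/3$ exponent when the incidence graph is genuinely extremal (as on a section of the integer lattice). To break this barrier one would need either a restricted Szemer\'edi--Trotter bound with exponent strictly less than $2/3$ for translated graphs of a strictly convex function---ruling out the lattice-like extremisers by exploiting that $f$ has no three collinear points on its graph---or a fundamentally non-incidence input, such as an $L^p$ argument controlling the representation function of $AA$ by that of $A+A$ through Fourier analysis on $\log A$. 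I would concentrate effort on the first: attempting to show that the second moment of the incidence count for curves $\{y=f(x)+c\}$ with $c\in A+B$ is bounded by $|A|^{2}|B|^{2+o(1)}$, which would close the gap. Without such a sub-extremal incidence input, any proof of Conjecture~\ref{conj:erdsze} via the convexity route presented here seems out of reach, and I expect that the techniques of this paper, pushed to their limit, will fall short of $|A|^{2-\epsilon}$ and instead yield improvements only in the third decimal place of the best known exponent.
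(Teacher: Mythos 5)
There is nothing in the paper for your argument to be compared against: the statement you were asked about is the Erd\H{o}s--Szemer\'edi conjecture itself, which the paper states purely as motivation and does not prove --- it is a famous open problem (over $\mathbb Z$ and over $\mathbb R$), and the best known exponents are around $4/3$ plus a small constant, nowhere near $2-\epsilon$. Your proposal correctly recognises this, and your substitution idea ($f=g=\log$, equivalently working with $\exp$ on $\log A$) is exactly how the paper extracts its genuine contribution in this direction, namely Corollary~\ref{sum-productasym}: $|AB|^{38}|A+B|^{38}\gtrsim (|A||B|)^{49}$. But what you have written is a research programme, not a proof, and you concede the decisive step yourself.

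The concrete gap is the one you name and then do not fill: every quantitative input in this paper (Lemma~\ref{lem:estimatinge3}, Lemma~\ref{lem: energy convex}, hence Corollary~\ref{c:E_3(B,V)E_3(f(C),U)} and Proposition~\ref{prop: energy general}) funnels through the Szemer\'edi--Trotter bound $I(P,L)\ll (|P||L|)^{2/3}+|P|+|L|$, whose $2/3$ exponent is sharp; no amount of iterating Xue-type regularisation can telescope that polynomial loss away, because Proposition~\ref{p:decomp} already returns positive-proportion subsets, so re-running the loop changes only logarithmic and constant factors, not the exponent. Obtaining either a sub-$2/3$ incidence bound for translates of a strictly convex graph or an inequality of the shape $|A+B|^{1+o(1)}|f(A)+g(B)|^{1+o(1)}\gtrsim (|A||B|)^{2}$ is not a strengthening of the paper's method --- it is an open problem at least as hard as the conjecture itself (indeed the conjectured two-variable bound is false in general without further hypotheses on $f$, as lattice-like constructions for moderate convexity show). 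So your write-up should be read as a correct situational analysis of why the conjecture is out of reach by these techniques, not as a proof; as a proof it has a gap coextensive with the entire statement.
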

\footnotetext{In this paper we use the standard notation $X \ll Y$ to mean that there exists an absolute constant $c$ with $X \leq cY$. We have $Y \gg X$ iff $X \ll Y$. We write $X\sim Y$ to denote the existence of constants $0 < c_1\leq c_2 $ so that $c_1X \leq Y \leq c_2X$. 
Additionally, the symbols $\lesssim$ and $\gtrsim$ are used to suppress logarithmic factors.} This conjecture remains open, and has given rise to the study of the \emph{sum-product} phenomenon, which, loosely defined, is the notion that finite sets cannot be simultaneously additively and multiplicatively structured. Conjecture \ref{conj:erdsze} is believed to be true over the real numbers, where current progress is given by Rudnev and Stevens \cite{MishaSophie}.

There are many variants of this problem in the literature; one family of such variants are concerned with convex functions\footnote{In this paper all convex functions considered are \emph{strictly} convex functions. Furthermore, our results also apply to strictly \emph{concave} functions.}. Such results quantify the notion that \emph{convex functions destroy additive structure.}
Some examples of common problems in this area are the following: 

For $A \subseteq \mathbb R$ a finite set, and $f$ be a convex function:
\begin{itemize}

\item Is the set $A + f(A)$ always large?
    \item Is at least one of the sets $A+A$ or $f(A) + f(A)$ is always large?
    
\end{itemize}
Much research has been done towards these problems and their variants, see for instance \cite{elekesnathansonruzsa,higherconvexity, liolly}. This is also related to the notion of a \emph{convex set}, that is, a set $A = \{ a_1 < a_2 < ... <a_n \}$ such that $a_{i+1} -a_i > a_i - a_{i-1}$ for all $2 \leq i \leq n-1$. Any convex set is the image of the interval $[n]$ under some convex function $f$. Current progress for these problems is given, respectively, by Li and Roche-Newton, \cite{liolly} and  Shkredov \cite{shkredovconvex}.

\begin{theorem}[Li, Roche-Newton] \label{convex2original}
Let $A \subseteq \mathbb R$ be a finite set, and let $f$ be a convex function. Then we have
$$|A + f(A)| \gtrsim |A|^{\frac{24}{19}}.$$
\end{theorem}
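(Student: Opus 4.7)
The plan is to bound the additive energy and then apply Cauchy--Schwarz. Setting $n=|A|$ and $B=f(A)$ (so $|B|=n$ by strict convexity), the Cauchy--Schwarz inequality gives $|A+B|\geq n^{4}/E(A,B)$, where $E(A,B)=|\{(a,a',b,b')\in A^2\times B^2 : a+b=a'+b'\}|$. So to reach the target $|A+f(A)|\gtrsim n^{24/19}$ it suffices to prove an upper bound $E(A,B)\lesssim n^{52/19}$.

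Writing $E(A,B)=\sum_d r_{A-A}(d)\,r_{B-B}(d)$, the convexity of $f$ enters through the factor $r_{B-B}(d)$: this counts incidences between the graph $\{(a,f(a)):a\in A\}$ and its vertical translate by $d$. Strict convexity forbids three collinear points on this graph, so the Szemer\'edi--Trotter theorem (with pseudolines replaced by translates of a strictly convex curve) furnishes a nontrivial upper bound for $|\{d : r_{B-B}(d)\geq \tau\}|$ and correspondingly for higher moments of $r_{B-B}$. On the $A$-side one uses the elementary bound $\sum_d r_{A-A}(d)^{3/2}\leq |A|\cdot E(A)^{1/2}$ to control the matched moment.

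Combining these two inputs by H\"older's inequality, with a dyadic decomposition of the sum over $d$ according to the size of $r_{A-A}(d)$, already recovers the original Elekes--Nathanson--Ruzsa-type exponent $5/4$. To push past this to $24/19$, I would introduce a subset-selection step: when $E(A)$ is large, apply a Balog--Szemer\'edi--Gowers / Pl\"unnecke argument to pass to a large $A'\subseteq A$ with controlled additive doubling, run the convex incidence input on $A'$, and then propagate the resulting bound back to $A$ via Pl\"unnecke--Ruzsa. The specific exponent $24/19$ comes out of optimizing the resulting linear program in the two regimes (structured / unstructured $A$).

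The principal obstacle is obtaining the sharp exponent: neither the Szemer\'edi--Trotter input on the convex side nor the subset-selection step on the additive side is enough in isolation, and the improvement from $5/4$ to $24/19$ comes from their careful interplay together with uniform control of logarithmic losses from dyadic pigeonholing. A secondary subtlety worth flagging is that $B=f(A)$ is in general \emph{not} a convex set even though $f$ is convex, so moment bounds of Solymosi--Konyagin type on $B$ do not directly apply; instead, convexity must be used through incidences with the graph of $f$ rather than through any intrinsic structure of the image $f(A)$.
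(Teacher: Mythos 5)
The paper does not itself prove Theorem~\ref{convex2original}: it is quoted as background from Li and Roche-Newton \cite{liolly}, and the paper's own contribution is the strictly stronger Corollary~\ref{cor:convex functions} ($49/38 > 24/19$), obtained from Theorem~\ref{thm:main}. So your attempt should be measured against the Li--Roche-Newton argument, with the paper's machinery for the stronger bound as a guide to what tools are actually needed.

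Your opening moves are sound: reducing to an upper bound on $E(A,f(A))$ via Cauchy--Schwarz, using Szemer\'edi--Trotter on translates of the strictly convex curve (rather than any intrinsic structure of $f(A)$), and observing that this pipeline already yields $|A+f(A)|\gtrsim|A|^{5/4}$, are all correct.

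The gap is the step you propose to go from $5/4$ to $24/19$, namely a Balog--Szemer\'edi--Gowers / Pl\"unnecke subset-selection dichotomy. This is not what Li--Roche-Newton do, and it would not produce a clean exponent of $24/19$: BSG-type refinements lose polynomial factors in the doubling constant, so after passing to $A'\subseteq A$ and then propagating back to $A$ via Pl\"unnecke--Ruzsa, you cannot control the exponents tightly enough to beat $5/4$, let alone land on $24/19$. The actual mechanism, both in \cite{liolly} and, in refined form, in the present paper, is the \emph{third-moment} energy framework rather than a BSG dichotomy. One exploits that for $S=A+f(A)$ every $a\in A$ has $r_{S-f(A)}(a)\geq |A|$, and feeds this into a Szemer\'edi--Trotter incidence count to bound $\E_3^+(f(A),\cdot)$, as in Lemma~\ref{lem: energy convex}; the improvement past $5/4$ then comes from a careful dyadic pigeonholing of the level sets of the representation function combined with a matched H\"older step, not from passing to a BSG-structured subset. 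In this paper that combination is systematised through the regularisation result Proposition~\ref{p:decomp} and Corollary~\ref{c:E_3(B,V)E_3(f(C),U)}. Without that third-moment input and decomposition, the second-moment Cauchy--Schwarz route you describe stalls at $5/4$, and the BSG step does not close the gap.
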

\begin{theorem}[Shkredov] \label{convexoriginal}
Let $A \subseteq \mathbb R$ be a finite set, and let $f$ be a convex function. Then we have
$$|A+A| + |f(A) + f(A)| \gtrsim |A|^{\frac{100}{79}}.$$
\end{theorem}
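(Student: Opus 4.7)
Set $K = |A+A|/|A|$ and $M = |f(A)+f(A)|/|A|$, so the target becomes $\max(K,M) \gtrsim |A|^{21/79}$. My plan is to lower-bound $K$ and $M$ jointly via an energy argument. Cauchy-Schwarz yields $E^+(A) \geq |A|^3/K$ and $E^+(f(A)) \geq |A|^3/M$, where $E^+$ denotes the additive energy. The critical input from convexity is the following near-injectivity lemma: if $f$ is strictly convex and $a,b,c,d \in \R$ satisfy both $a+b = c+d$ and $f(a)+f(b) = f(c)+f(d)$, then $\{a,b\} = \{c,d\}$. This follows because for any fixed $s$, the map $x \mapsto f(x) + f(s-x)$ is strictly convex and so attains each value at most twice. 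Consequently the mixed energy counting simultaneous additive collisions in $A$ and $f(A)$ is $O(|A|^2)$.

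To convert these energy estimates into the desired exponent, I would invoke the \ST theorem, viewing the points $\{(a, f(a)) : a \in A\}$ as lying on a strictly convex curve. For each popular $s \in A+A$, the involution $a \mapsto s-a$ produces a family of pairs whose $f$-images lie on a line, so incidences between these configurations and lines in the plane yield bounds of the form $|A+A|^p |f(A)+f(A)|^q \gtrsim |A|^r$ for various triples $(p,q,r)$. To push past the classical Elekes--Nathanson--Ruzsa exponent, I would next introduce the third additive energy $E_3^+$ and apply a Schoen--Shkredov style clustering step: dyadically decompose the representation functions $r_{A+A}$ and $r_{f(A)+f(A)}$, and at each dyadic level pass to a popular subset on which the representation functions are essentially constant, where a sharper incidence bound becomes available. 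A Pl\"unnecke--Ruzsa step translates these conclusions between $A$ and $A-A$ and between $f(A)$ and $f(A)-f(A)$.

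The main obstacle is the tight optimisation of the numerical exponent. The value $100/79$ almost certainly arises as the solution of a system of interacting inequalities -- the two energy lower bounds, the mixed energy upper bound, a Pl\"unnecke--Ruzsa inequality, and the \ST output -- parameterised by the dyadic popularity level and by whether one chooses to extract structure in $A$ or in $f(A)$. The odd denominator $79$ is a strong indication that at the optimum two or three distinct regimes balance one another, so small adjustments to the decomposition yield strictly worse exponents; extracting the sharp $100/79$ will require choosing these parameters precisely and applying Cauchy--Schwarz exactly the right number of times. I would expect the final bookkeeping to take the shape of a short linear program over the free exponents, with $100/79$ emerging as its optimum.
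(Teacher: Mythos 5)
The statement you are attempting is quoted in the paper from Shkredov's work \cite{shkredovconvex}; the present paper cites it as prior art and does not reprove it, so there is no internal proof of Theorem~\ref{convexoriginal} against which to compare. That said, the proposal as written is not a proof: it is a strategy outline that defers precisely the step that produces the exponent. You say explicitly that extracting $100/79$ ``will require choosing these parameters precisely'' and ``I would expect the final bookkeeping to take the shape of a short linear program.'' Until that program is actually written down and solved, you have not established the bound, only named some tools that are plausibly relevant.

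There are also concrete technical problems in the sketch. The near-injectivity observation is correct (the map $x \mapsto f(x)+f(s-x)$ is strictly convex, so each value is attained at most twice), but it is far from sufficient on its own: combining that $O(|A|^2)$ bound with Cauchy--Schwarz applied to the map $(a,b)\mapsto(a+b,\,f(a)+f(b))$ gives only $|A+A|\,|f(A)+f(A)| \gg |A|^2$, i.e.\ the trivial threshold $\gg |A|$, nowhere near $|A|^{100/79}$. The Szemer\'edi--Trotter step is also stated inaccurately: for a popular $s\in A+A$, the points $(f(a),f(s-a))$ do \emph{not} lie on a line; they lie on the image of a strictly convex curve, and the relevant incidence count must be set up between translated convex curves (or lines in a carefully chosen coordinate system), not between raw point sets and generic lines. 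Finally, Shkredov's actual argument hinges on the \emph{Szemer\'edi--Trotter set} machinery (the quantity $d^+(A)$, which packages the optimal third-moment energy bound obtainable from incidence data, combined with a delicate interplay of $\E_3^+$ estimates and Katz--Koester-type inclusions), none of which appears in your outline. Mentioning energy, Pl\"unnecke--Ruzsa, and dyadic pigeonholing is not unreasonable, but without the $d^+$ framework and the specific chain of inequalities that balances them, there is no path from your sketch to $100/79$.
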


These problems are also related to \emph{expander} results. Results of this nature state that some set, defined by (typically polynomial) combinations of elements of $A$, is \emph{always} large. Two of the simplest examples of expanders are the sets
$$AA +A = \{ ab+c : a,b,c \in A \}, \qquad A(A+1) = \{a(b+1) : a,b \in A \}$$
which are both expected to have size $|A|^{2 - \epsilon}$ for all $\epsilon>0$. In fact, the expander $A(A+1)$ is a special case of the set $A + f(A)$ from above. The current bounds in the literature for these expanders are due to Roche-Newton and Warren \cite{ollyaudie} and Jones and Roche-Newton \cite{jonesolly}, respectively.
\begin{theorem}[Roche-Newton, Warren]\label{AA+Aoriginal}
For all $A \subseteq \mathbb R$ finite, we have
 $$|AA+A| \gtrsim |A|^{\frac{3}{2} + \frac{1}{194}}.$$
\end{theorem}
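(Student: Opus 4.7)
The plan is to attack this via the Elekes-style incidence geometry framework, refined with a higher-moment energy argument and a pigeonhole/regularisation step. Write $N = |A|$. Consider the point set $P := A \times (AA+A) \subset \mathbb{R}^2$ and the family of lines $L := \{\ell_{a,c} : y = ax+c, \, (a,c) \in A \times A\}$. Each line $\ell_{a,c}$ contains precisely $N$ points of $P$, namely $(b, ab+c)$ for $b \in A$, so $I(P,L) \geq N^3$. A direct application of the Szemerédi-Trotter theorem already yields $|AA+A| \gg N^{3/2}$; the task is to squeeze out the additional $N^{1/194}$.

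To do so, I would dyadically decompose the representation function $r(x) = |\{(a,b,c) \in A^3 : ab+c = x\}|$, whose total mass is $N^3$ and whose support is $AA+A$. Pigeonholing, one isolates a dyadic level $\Delta$ and a set $S \subseteq AA+A$ on which $r(x) \sim \Delta$ and $|S|\Delta \gtrsim N^3$. Cauchy-Schwarz gives $|AA+A| \geq N^6 / E$ where $E = \sum_x r(x)^2$ counts solutions to $a_1 b_1 + c_1 = a_2 b_2 + c_2$. Reinterpreting $E$ as a collinear-quadruples/incidence count and invoking a Rudnev-Shkredov-type bound improves on the naive Szemerédi-Trotter estimate in the regime where the incidences concentrate on rich lines, provided the configuration is sufficiently balanced.

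The key technical step---and the one I expect to be the main obstacle---is establishing this balance. Without it, a worst-case configuration in which a few $a \in A$ dominate the incidence count sabotages the energy estimate. I would therefore apply a popularity argument to pass to a subset $A' \subseteq A$ of positive proportion on which, for every $a \in A'$, the profile of how $a$ participates in the incidence/energy decomposition is roughly uniform. Producing such an $A'$ while retaining enough of the original incidences is precisely the type of task for which the Xue-style regularisation highlighted in the paper's introduction is designed; I would iterate a "peel off the bad $a$'s" procedure until the remaining set is regular at every relevant dyadic scale simultaneously.

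Finally, one balances the dyadic parameter $\Delta$, the richness threshold of the rich/poor split, and the regularisation levels to extract the final exponent. The unusual denominator $194$ is the telltale sign that this optimisation involves simultaneously balancing three or four inequalities whose exponents are dictated by the energy and incidence estimates above; I expect this numerology---rather than any conceptual ingredient---to be the most laborious part, and any slackness in a single estimate would degrade the $1/194$ to something weaker.
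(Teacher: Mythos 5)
Your Elekes-style set-up (points $A\times(AA+A)$, lines $y=ax+c$ with $(a,c)\in A\times A$, the $N^3$ incidence lower bound, and the observation that Szemer\'edi--Trotter alone yields only $N^{3/2}$) is exactly right, and you correctly flag Rudnev--Shkredov-type incidence theory and regularisation as the relevant tools. However, the specific mechanism you propose --- reducing via Cauchy--Schwarz to bounding $E=\sum_x r(x)^2$ where $r(x)=|\{(a,b,c):ab+c=x\}|$ --- does not match how Roche-Newton and Warren break the $3/2$ barrier, and I do not believe it can be made to work as stated. The quantity $E$ is a second-moment energy of the sum set $AA+A$ itself, and no tool currently beats $E\lesssim N^{9/2}$ unconditionally. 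The Rudnev--Shkredov incidence theorem does not bound $E$; it bounds the incidence count $\mathcal{I}(P,L)$ in terms of the \emph{line energy} of the line family $L=\{y=ax+a':a,a'\in A\}$, which amounts to a multiplicative energy of $A$, not an energy of $AA+A$. Combined with the bound of Roche-Newton--Rudnev on the number of solutions to $(a_1-a_2)/(a_3-a_4)=(a_5-a_6)/(a_7-a_8)$, this produces (essentially Theorem~\ref{thm:incidence} above) the inequality $|A|^{7/3}\lesssim |AA+A|\,\E_4^\times(A)^{1/6}$, which is effective precisely when $\E_4^\times(A)$ is small.

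The exponent $3/2+1/194$ then emerges from balancing this against a \emph{second}, independent estimate which is strong in the complementary regime where $\E_4^\times(A)$ is large (so $A$ carries multiplicative structure which can be exploited by other means) --- not from optimising a single chain of inequalities over dyadic and richness parameters. Your plan has no counterpart to this second bound, and no mechanism to use the multiplicative energy of $A$ rather than the energy of $AA+A$; without both the line-energy incidence theorem and this dichotomy, the argument collapses back to the $N^{3/2}$ threshold. The regularisation step you invoke is indeed used in this circle of ideas (it underlies the proof of Theorem~\ref{thm: AB+A} in this paper), but it serves to produce a popular dyadic layer inside a difference or quotient set on which the energy concentrates, rather than to equalise per-$a$ incidence profiles as you suggest.
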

\begin{theorem}[Jones, Roche-Newton]\label{A(A+1)original}
 For all $A \subseteq \mathbb R$ finite, we have
$$|A(A+1)| \gtrsim |A|^{\frac{24}{19}}.$$
 \end{theorem}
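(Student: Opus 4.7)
The plan is to derive this as a corollary of Theorem \ref{convex2original}, exploiting the observation (flagged just before the statement) that $A(A+1)$ fits the template $B + f(B)$ after passing to logarithms.

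First I would handle signs. By pigeonhole, a positive-proportion subset of $A$ lies in one of the three intervals $(0,\infty)$, $(-1,0)$, or $(-\infty,-1)$. After passing to this subset and, in the latter two cases, negating, the identities $A(A+1) = A'(1-A')$ for $A' = -A \subseteq (0,1)$ and $A(A+1) = A'(A'-1)$ for $A' = -A \subseteq (1,\infty)$ reduce matters to one of three situations in which both factors are positive and of size $\gtrsim |A|$. Writing $B = \log A'$ (under whichever of these reductions applies), injectivity of $\log$ turns the multiplicative expander into an additive one:
\[
|A(A+1)| \;\gtrsim\; |\log A' + \log(A' \pm 1)| \;=\; |B + f(B)|,
\]
where $f$ is one of $\log(e^x + 1)$, $\log(1 - e^x)$, or $\log(e^x - 1)$ depending on the case.

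A direct computation gives $f''(x) = e^x/(e^x + 1)^2 > 0$ in the first case and $f''(x) < 0$ in each of the other two, so $f$ is strictly convex or strictly concave. Since Theorem \ref{convex2original} applies in both cases (per footnote 3), it yields
\[
|B + f(B)| \;\gtrsim\; |B|^{24/19} \;\gtrsim\; |A|^{24/19},
\]
completing the argument.

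The main obstacle here is purely bookkeeping: checking the three sign cases and verifying strict convexity/concavity of the three associated functions $f$. All of the analytic content is packaged inside Theorem \ref{convex2original}, and in particular any future improvement to the exponent $24/19$ in the convex-sum bound would transfer mechanically to this expander.
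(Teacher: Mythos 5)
Theorem~\ref{A(A+1)original} is cited by the paper from Jones and Roche-Newton \cite{jonesolly}; it is not proved here, so there is no internal argument of the paper to compare against. Your derivation from Theorem~\ref{convex2original} is correct: after discarding the at most two elements $0$ and $-1$ and pigeonholing into the three sign intervals, negating in the latter two cases gives $|A(A+1)|\gtrsim |A'(1-A')|$ or $|A'(A'-1)|$ with $A'\subset(0,1)$ or $(1,\infty)$ respectively, and taking logarithms yields $|B+f(B)|$ for $B=\log A'$ with $f$ one of $\log(e^x+1)$, $\log(1-e^x)$, $\log(e^x-1)$; your second-derivative computations confirming these are strictly convex (first case) or strictly concave (other two) are right, and the concave cases are admissible per the paper's footnote. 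This formalizes exactly the remark preceding the statement that ``$A(A+1)$ is a special case of the set $A+f(A)$'', and it is the same reduction used implicitly to pass from Theorem~\ref{thm:main} to the improved bound $|A(A+1)|\gtrsim|A|^{49/38}$ of Corollary~\ref{cor:A(A+1)}. Worth noting only: this is not the route taken in \cite{jonesolly} itself, where the authors argue directly with energy and incidence estimates adapted to the multiplicative set $A(A+1)$; so what you have is a clean, correct alternative derivation via Theorem~\ref{convex2original}, consistent with the present paper's framing, rather than a reconstruction of the cited proof.
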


\subsection*{Main results}
The proof of the sum-product result in \cite{MishaSophie} makes use of a combination of techniques used previously in the real numbers, combined with a technique used to prove sum-product results in finite fields, see \cite{rss}. In this paper we extend these techniques to give both quantitative and qualitative improvements to the problems mentioned above. Note that we make no attempt to optimise the logarithmic factors in our results, since in all cases the polynomial factor exponents are not expected to be tight. Our main result is the following.

\begin{theorem}\label{thm:main}
Let $A, B \subseteq \mathbb R$ be finite sets, and let $f$ and $g$ each be either a convex or concave function. Then we have
$$|A+B|^{38}|f(A) + g(B)|^{38} \gtrsim (|A||B|)^{49}.$$
\end{theorem}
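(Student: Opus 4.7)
Set $N = |A+B|$ and $M = |f(A)+g(B)|$; the goal is equivalent to $NM \gtrsim (|A||B|)^{49/38}$. By composing $f$ or $g$ with a sign change where needed, I first reduce to the case where both $f$ and $g$ are convex, the other convex/concave combinations following by analogous sign manipulations. A basic observation driving the whole argument is that the map $(a,b)\mapsto (a+b,\,f(a)+g(b))$ is at most two-to-one on $A\times B$: for each fixed $s$, the function $a\mapsto f(a)+g(s-a)$ is strictly convex and hence takes each value at most twice. Consequently the point set $P:=\{(a+b,\,f(a)+g(b)) : (a,b)\in A\times B\}\subseteq (A+B)\times (f(A)+g(B))$ satisfies $|P|\geq |A||B|/2$ inside a grid of size $NM$.

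To set up an incidence problem I introduce two curve families, $\mathcal{C}=\{C_a\}_{a\in A}$ and $\mathcal{D}=\{D_b\}_{b\in B}$, where $C_a$ is the graph of $x\mapsto f(a)+g(x-a)$ and $D_b$ the graph of $x\mapsto f(x-b)+g(b)$. Each $C_a$ passes through exactly $|B|$ points of $P$ and each $D_b$ through exactly $|A|$ of them, and since $g'$ (respectively $f'$) is strictly monotone, any two distinct curves in the same family meet in at most one point. Both families are therefore pseudo-line systems to which the Szemer\'edi-Trotter theorem applies.

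The body of the argument then follows the recent sum-product template of \cite{MishaSophie, ollyaudie, rss}. I would dyadically pigeonhole the representation functions $r_+(s)=\#\{(a,b)\in A\times B :a+b=s\}$ and $r_\oplus(t)=\#\{(a,b)\in A\times B :f(a)+g(b)=t\}$ to isolate popular multiplicity levels, and then apply an adaptation of Xue's regularisation \cite{xue,rss} to produce positive-proportion subsets $A'\subseteq A$ and $B'\subseteq B$ on which these multiplicities are nearly uniform in both coordinates. Szemer\'edi-Trotter applied to $\mathcal{C}$ and $\mathcal{D}$ (restricted to the regularised point set) yields upper bounds on third moment energies such as $\sum_s r_+(s)^3$, while H\"older's inequality supplies the matching lower bounds $\sum_s r_+(s)^3\geq (|A||B|)^3/N^2$ and $\sum_t r_\oplus(t)^3\geq (|A||B|)^3/M^2$. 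Combining these estimates, using Pl\"unnecke--Ruzsa to control sumsets on the passed subsets, and optimising the resulting linear program in the parameters $|A|$, $|B|$, $N$, $M$ should close at the exponent $49/38$.

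\textbf{Main obstacle.} The delicate step is the adaptation of Xue's regularisation, which was originally designed to regularise a single set's additive structure without incurring cascading pigeonhole losses, to the asymmetric bipartite setting. I need to regularise both $A$ and $B$ simultaneously in a manner compatible with the two pseudo-line families $\mathcal{C}$ and $\mathcal{D}$, so that the subsequent Szemer\'edi-Trotter step genuinely beats the trivial incidence bound rather than dissipating into logarithmic losses. Pushing the final optimisation down to $49/38$, rather than bleeding exponents away in this trade-off, is where the novel content of the proof should lie.
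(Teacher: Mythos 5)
Your proposal identifies the right family of tools (Szemer\'edi--Trotter, dyadic decomposition, Xue-style regularisation), and the two-to-one observation about $(a,b)\mapsto(a+b,f(a)+g(b))$ is a genuine fact, but what you have written is a sketch of a programme, not a proof, and it is missing the combinatorial idea that actually drives the paper. The paper never works with your pseudo-line families $\mathcal{C}$, $\mathcal{D}$ or with the point set $P$ directly. Instead the Szemer\'edi--Trotter input is packaged (Lemmas \ref{lem:estimatinge3}, \ref{lem: energy convex}) as bounds on third-moment energies $\E_3^+(\cdot,\cdot)$ under a hypothesis of the form $r_{Q-R}(a)\geq T$ for all $a$, and the regularisation (Proposition \ref{p:decomp}) is used precisely to manufacture a positive-proportion subset $C\subseteq A$ on which such representation data holds with $Q=D_t$, $R=-V$, $T\sim |D_t|t/|V|$. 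Your plan of ``regularising $A$ and $B$ simultaneously so the multiplicities are nearly uniform in both coordinates'' is not what is done, and it is not clear that it can be made to close; the paper regularises $A$ twice (once against $V=A$, once against $V=f(A)$) and separately $B$ twice, producing $A_4\subseteq A$, $B_4\subseteq B$ each carrying $r$-data suitable for Lemma \ref{lem: energy convex}.

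The step you have no analogue of --- and the one that produces the nontrivial exponent --- is Proposition \ref{prop: energy general}: counting solutions to the tautological equation $a-b=(a+c)-(b+c)$ with $a-b$ constrained to the dyadic level set $D$ supporting $\E_k^+(A)$ and $a+c$ constrained to the popular part of $A+C$, grouping solutions into translation equivalence classes, and applying Cauchy--Schwarz to the class sizes. This yields the auxiliary inequality
\[
|D|^9\Delta^{12}\ \lesssim\ \frac{|A+C|^6\,\E_3^+(A)^4\,\E_3^+(C)^2\,\E_3^+(A,D)\,\E_3^+(C,A+C)^2}{|C|^{18}|A|^3}\,,
\]
which is then instantiated four times (for $A_4$, $B_4$, $f(A_4)$, $g(B_4)$), multiplied, and simplified using Corollary \ref{c:E_3(B,V)E_3(f(C),U)}. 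Without a replacement for this step, your ``combine and optimise the linear program'' is an IOU: the bare pseudo-line incidence count on $P$ gives only the trivial $|A+B||f(A)+g(B)|\gtrsim |A||B|$, and H\"older lower bounds on $\sum_s r_+(s)^3$ alone do not break past $|A|^{4/3}$-type thresholds. (Also, Pl\"unnecke--Ruzsa plays no role here and cannot help since there is no small-doubling hypothesis.) So as it stands there is a genuine gap exactly where you flag one --- you would need to discover the equivalence-class counting of Proposition \ref{prop: energy general}, or an argument of comparable strength, to reach $49/38$.
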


For certain choices of $A,B,f,$ and $g$, this theorem implies improvements to many of the problems mentioned above. Firstly, we can recover the following improvements to Theorems \ref{convex2original} and \ref{convexoriginal}.

\begin{corollary} \label{cor:convex functions}
For all $A \subseteq \mathbb R$ finite, and $f$ a convex function, we have
$$|A + f(A)| \gtrsim |A|^{49/38},$$
$$|A + A| + |f(A) + f(A)| \gtrsim |A|^{49/38}.$$
\end{corollary}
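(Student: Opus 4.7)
Both inequalities will follow directly from Theorem \ref{thm:main} by specialising the sets and functions. There is no serious obstacle; the only subtlety is verifying that the natural reduction for $|A+f(A)|$ uses a function $g$ which is itself convex or concave.

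For the first bound, the plan is to apply Theorem \ref{thm:main} with $B := f(A)$ and $g := f^{-1}$. Since $f$ is strictly convex (or strictly concave) and hence monotone on an interval containing $A$, its inverse $f^{-1}$ exists on $f(A)$ and is strictly concave (or strictly convex) there, so the hypotheses of Theorem \ref{thm:main} are satisfied. Under this choice, $f(A) + g(B) = f(A) + f^{-1}(f(A)) = f(A) + A$, so both $|A+B|$ and $|f(A)+g(B)|$ equal $|A+f(A)|$. Using $|B| = |f(A)| = |A|$, the theorem gives
$$|A+f(A)|^{76} \gtrsim |A|^{98},$$
which rearranges to $|A+f(A)| \gtrsim |A|^{49/38}$.

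For the second bound, I would instead take $B := A$ and $g := f$, so that Theorem \ref{thm:main} directly yields
$$\bigl(|A+A|\cdot |f(A)+f(A)|\bigr)^{38} \gtrsim |A|^{98},$$
i.e.\ $|A+A|\cdot|f(A)+f(A)| \gtrsim |A|^{49/19}$. Applying the AM-GM inequality $x+y \geq 2\sqrt{xy}$ with $x = |A+A|$ and $y = |f(A)+f(A)|$ then gives the desired $|A+A| + |f(A)+f(A)| \gtrsim |A|^{49/38}$. The only case-analysis required is the elementary observation that whichever of the four combinations of (convex/concave)$\times$(increasing/decreasing) applies to $f$, the inverse $f^{-1}$ is again convex or concave, so the application of Theorem \ref{thm:main} in the first part is always legitimate.
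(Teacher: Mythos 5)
Your proposal is correct and takes exactly the same route as the paper: specialise Theorem \ref{thm:main} with $B=f(A)$, $g=f^{-1}$ for the first bound, and $B=A$, $g=f$ together with AM--GM for the second. Your elaboration of the case-analysis showing $f^{-1}$ remains convex or concave (in each of the four monotonicity/convexity combinations) is a worthwhile explicit check that the paper leaves implicit.
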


The first inequality follows from setting $B = f(A)$ and $g = f^{-1}$. The second follows from setting $B = A$ and $f = g$. By slightly adjusting the proof of Theorem \ref{thm:main}, we can obtain a better bound for differences.

\begin{corollary} \label{cor:convex diff}
For all $A \subseteq \mathbb R$ finite, and $f$ a convex function, we have
$$|A - A|^5|f(A) - f(A)|^5 \gtrsim |A|^{13}.$$
\end{corollary}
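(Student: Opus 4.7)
The proof will mirror that of Theorem~\ref{thm:main} specialised to $B=A$ and $g=f$, but replacing sumsets by difference sets throughout. Write $K_1 = |A-A|/|A|$ and $K_2 = |f(A)-f(A)|/|A|$; the goal is the equivalent bound $K_1^5 K_2^5 \gtrsim |A|^3$. The overall framework remains the same: dyadic pigeonholing on the representation functions of $A-A$ and $f(A)-f(A)$, followed by a \ST incidence bound on a family of curves attached to $f$ and popular differences, together with the Xue / Rudnev-Shkredov-Stevens regularisation from \cite{xue, rss} to extract a positive-proportion subset on which the relevant energy is evenly distributed across popular slopes. By strict convexity, the curves $\{y = f(x+t) - f(x) : t \in A-A\}$ are pairwise graph-like (intersecting in at most one point), which is precisely what is needed at the incidence step.

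The improvement over what one would obtain by directly specialising Theorem~\ref{thm:main} comes from the Pl\"unnecke-Ruzsa inequality. At several points of the proof of Theorem~\ref{thm:main}, iterated sumsets of the form $|kA + \ell A|$ (and similarly for $f(A)$) must be bounded in terms of the doubling constant, and each time one wishes to exchange a $+$ for a $-$ one must invoke the Ruzsa triangle inequality, paying a factor. In the difference setting these iterated sets $|kA - \ell A|$ are bounded by $K_1^{k+\ell}|A|$ directly; the analogous remark holds for $f(A)-f(A)$. Propagating these savings through the exponent balance is what sharpens the final exponent from $49/38$ to $13/10$.

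The main obstacle will be bookkeeping. At each step in the proof of Theorem~\ref{thm:main}, one must verify that the sum-to-difference swap is legitimate, in particular that the additive energies used in the regularisation still have the required cardinality and concentration properties when rewritten for differences (the Cauchy-Schwarz lower bound $E^+(A) \geq |A|^4/|A-A|$ is unchanged, but the grouping by popular differences must be re-examined), and then re-balance the exponents at the end of the argument. A secondary check is that the curve family $y = f(x+t) - f(x)$ indexed by $t \in A-A$ satisfies the pseudoline-type hypothesis of the \ST incidence bound, which is immediate from strict convexity of $f$.
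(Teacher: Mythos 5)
Your proposal misidentifies the mechanism behind the improved exponent, and as a result the key step is missing.

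In the paper, Corollary~\ref{cor:convex diff} follows from the second statement~\eqref{e:|A+B||f(A)+g(B)||A-A|etc} of Theorem~\ref{t:|A+B||f(A)+g(B)|etc} by taking $B=A$, $g=f$, and minus signs. The proof of Theorem~\ref{t:|A+B||f(A)+g(B)|etc} never invokes Pl\"unnecke--Ruzsa or bounds on iterated sumsets $|kA \pm \ell A|$, so there is no ``Ruzsa triangle penalty'' being avoided in the difference setting. The argument is purely an energy argument: regularisation (Proposition~\ref{p:decomp}, Corollary~\ref{c:E_3(B,V)E_3(f(C),U)}), the \ST-driven convex energy bound (Lemma~\ref{lem: energy convex}), and the auxiliary energy identity (Proposition~\ref{prop: energy general}) combine to give a common intermediate bound of the form $\prod_{i} |D_i|^7 t_i^{12} \lesssim |A\pm B|^{20}|f(A)\pm g(B)|^{20}|A|^9|B|^9$. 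The entire difference between the exponents $49/38$ and $13/10$ lies in how one chooses the moment of the dyadic decomposition that defines $(D_i, t_i)$ and how one then converts $|D_i|^7 t_i^{12}$ back into a set-size bound. For sums, one decomposes at the second moment and couples it with the Cauchy--Schwarz bound $\E_2^+(X)\geq |X|^4/|X+X|$, which costs the extra factors of $t_i^2$ the argument has to buy back via third-energy estimates. For differences, one decomposes at the $12/7$th moment, so that $|D_i|^7 t_i^{12} \sim \E_{12/7}^+(\cdot)^7$ on the nose, and one uses the H\"older estimate $\E_{12/7}^+(X)^7\geq |X|^{24}/|X-X|^5$. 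That single change in the choice of moment and H\"older step is the whole improvement; it is not a propagation of Pl\"unnecke-type savings.

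Your proposal has a secondary inaccuracy: the curve family $\{y=f(x+t)-f(x): t\in A-A\}$ is not what the paper's Lemma~\ref{lem: energy convex} uses (the paper's Lemma~\ref{lem:estimatinge3} dualizes to a point--line incidence count with lines $y=qx-c$, and the convex analogue is stated to follow similarly). More importantly, that detail is orthogonal to the step where the exponent $13/10$ is produced, so even if your incidence setup were correct, your proof would not close because the moment-selection step is absent. To repair the proposal, replace the Pl\"unnecke--Ruzsa discussion entirely with: (i) run the energy machinery of Theorem~\ref{t:|A+B||f(A)+g(B)|etc} to reach $\prod_i |D_i|^7 t_i^{12}\lesssim |A-A|^{20}|f(A)-f(A)|^{20}|A|^{18}$; (ii) choose the dyadic decomposition at the $12/7$th moment so that $|D_i|^7 t_i^{12}\gtrsim \E_{12/7}^+(\cdot)^7$; (iii) apply H\"older to lower bound each $\E_{12/7}^+(X)^7$ by $|X|^{24}/|X-X|^5$; (iv) specialise $B=A$, $g=f$ and rearrange.
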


In the case $A = [1,\dots,n]$, this matches the bounds of Schoen and Shkredov \cite{schoenshkredov} and Rudnev and Stevens \cite{MishaSophie} for estimates on differences and sums of convex sets respectively. Furthermore we match the result of Li and Roche-Newton \cite{liolly} in the case of few differences, many convex differences.

Secondly, we find an asymmetric sum-product result.
\begin{corollary}\label{sum-productasym}
For all $A,B \subseteq \mathbb R$ finite, we have
$$|AB|^{38}|A+B|^{38} \gtrsim (|A||B|)^{49}$$
\end{corollary}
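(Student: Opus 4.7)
The plan is to deduce Corollary \ref{sum-productasym} directly from Theorem \ref{thm:main} via the classical observation that logarithms turn products into sums. Specifically, the target choice of functions is $f = g = \log|\cdot|$, which is strictly concave on each open half-line $\mathbb{R}^+$ and $\mathbb{R}^-$. Since $\log|a|+\log|b| = \log|ab|$ and $\log$ is injective, this will convert the sum set $f(A)+g(B)$ into a copy of the product set $AB$, at which point the theorem gives the desired inequality.

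Before applying this, I have to reduce to the case where $A$ and $B$ each sit in a single half-line, since $\log|\cdot|$ is not concave on all of $\mathbb{R}\setminus\{0\}$ and the product map $(a,b)\mapsto|ab|$ only injects into $AB$ when the signs are fixed. To do this, first discard $0$ from $A$ and $B$ (losing at most one element apiece, which is absorbed into the $\gtrsim$ constant). Then partition $A=A^+\cup A^-$ and $B=B^+\cup B^-$ by sign, and by pigeonhole choose $\varepsilon,\delta\in\{+,-\}$ with $|A^\varepsilon|\geq |A|/2$ and $|B^\delta|\geq |B|/2$. Set $A' = A^\varepsilon$ and $B' = B^\delta$.

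Now define $f(x)=\log|x|$ on the half-line containing $A'$ and $g(x)=\log|x|$ on the half-line containing $B'$; each is a strictly concave function on its domain, so Theorem \ref{thm:main} applies to give
\[
|A'+B'|^{38}\,|f(A')+g(B')|^{38} \gtrsim (|A'||B'|)^{49}.
\]
By injectivity of $\log$ together with the sign restriction, $|f(A')+g(B')| = |\{|ab|:a\in A',b\in B'\}| = |A'B'| \leq |AB|$, and trivially $|A'+B'|\leq |A+B|$. Combining these with $|A'|\geq|A|/2$ and $|B'|\geq|B|/2$ and absorbing the constants into $\gtrsim$ yields $|A+B|^{38}|AB|^{38}\gtrsim (|A||B|)^{49}$.

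There is no substantive obstacle to overcome: the deduction is essentially the exercise of picking the correct pair of functions in Theorem \ref{thm:main}. The only point requiring a moment of care is the reduction to same-sign subsets, which is the reason Theorem \ref{thm:main} was stated to allow two distinct functions $f$ and $g$ with possibly different domains in the first place.
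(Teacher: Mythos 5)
Your argument is correct and takes essentially the same route as the paper: the paper's proof of this corollary is the single line ``This follows from setting $A = X$, $B = Y$, $f = g = \log(x)$.'' You have in fact been more careful than the paper, which does not address the removal of $0$ or the reduction to same-sign subsets needed for $\log|\cdot|$ to be concave and for $|ab|$ to determine $ab$; your pigeonhole step fills in that glossed-over detail at the cost of only a constant factor, which is absorbed by the $\gtrsim$.
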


This follows from setting $A = X$, $B = Y$, $f = g = \log(x)$. 
Corollary~\ref{sum-productasym} appears to be a little studied variant of the asymmetric sum-product problem: One example of a result in this direction is by Solymosi \cite{Solymosi}, who showed that $|A+A||B+B||AB|\gtrsim |A|^2|B|^2$. There has also been work towards the more difficult problem of finding a lower bound on $|A+B||AC|$, see for instance \cite{elekesnathansonruzsa}, or \cite[Theorem 10]{brendan-richlinesgrids},
where the results are rather of a qualitative nature. 
The statement of Corollary~\ref{sum-productasym} is particularly interesting in the extremal cases of `few sums' or `few products': e.g. if $|A|=|B|=N$ and $|A+B|\lesssim N$, then $|AB|\gtrsim N^{\frac32 + \frac3{38}}$. Typically the exponent of $3/2$ is a barrier in sum-product estimates, and so in this sense, Corollary~\ref{sum-productasym} is threshold-breaking.

Thirdly we give some results demonstrating the principle that
`translation destroys multiplicative structure', in particular improving Theorem \ref{A(A+1)original}.

\begin{corollary} \label{cor:A(A+1)}
For all $A,B \subseteq \mathbb R$ finite, we have
$$|A(A+1)| \gtrsim |A|^{49/38},$$
$$|AB| + |(A+1)(B+1)| \gtrsim (|A||B|)^{49/76}.$$
\end{corollary}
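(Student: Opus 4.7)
The plan is to reduce both inequalities to direct applications of Theorem \ref{thm:main} (respectively Corollary \ref{cor:convex functions}) by a logarithmic change of variables, which converts the multiplications in $A(A+1)$ and $(A+1)(B+1)$ into sums of the form $X+h(X)$ or $h(X)+h(Y)$ for a suitable strictly convex function $h$. Since the corollary concerns sign-sensitive quantities such as logarithms, I first want to pass by a standard dyadic pigeonhole to subsets $A', B' \subseteq A, B$ of proportional size on which $a$ and $a+1$ (and similarly $b, b+1$) have constant sign. In the generic case I may therefore assume $A, B \subseteq (0,\infty)$; the remaining sign cases lead to either $\log(1-e^x)$ or $\log(e^x-1)$ in place of $h$, which are strictly concave, and are handled identically since Corollary \ref{cor:convex functions} and Theorem \ref{thm:main} both allow convex \emph{or} concave $f$.

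For the first inequality, set $X = \log A$, so $|X| = |A|$, and define $h(x) = \log(e^x+1)$. A direct computation gives $h''(x) = e^x/(e^x+1)^2 > 0$, so $h$ is strictly convex on $\mathbb{R}$. For $a,b \in A$,
\[
\log\bigl(a(b+1)\bigr) = \log a + \log(b+1) = x + h(y), \qquad x = \log a,\ y = \log b,
\]
so $\log(A(A+1)) = X + h(X)$, and in particular $|A(A+1)| = |X + h(X)|$. Applying the first inequality of Corollary \ref{cor:convex functions} to the set $X$ with convex function $h$ yields
\[
|A(A+1)| = |X + h(X)| \gtrsim |X|^{49/38} = |A|^{49/38}.
\]

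For the second inequality, set $X = \log A$ and $Y = \log B$, with the same $h$. Then $\log(AB) = X + Y$ and $\log((A+1)(B+1)) = h(X) + h(Y)$, so $|AB| = |X+Y|$ and $|(A+1)(B+1)| = |h(X) + h(Y)|$. Applying Theorem \ref{thm:main} with $f = g = h$ gives
\[
|AB|^{38} \cdot |(A+1)(B+1)|^{38} \;=\; |X+Y|^{38} \cdot |h(X)+h(Y)|^{38} \;\gtrsim\; (|X||Y|)^{49} = (|A||B|)^{49}.
\]
Finally, AM-GM produces
\[
|AB| + |(A+1)(B+1)| \;\geq\; 2\sqrt{|AB| \cdot |(A+1)(B+1)|} \;\gtrsim\; (|A||B|)^{49/76},
\]
as desired.

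Given that Theorem \ref{thm:main} is already in hand, there is no real analytic obstacle here; the only mildly delicate step is the sign bookkeeping needed to ensure that the logarithms are well-defined and that $h$ (or its concave analogues) is strictly convex/concave on the relevant range. This is handled by passing to subsets of proportional size where $a, a+1, b, b+1$ all have fixed sign, which costs only absolute constants and is harmlessly absorbed into the $\gtrsim$ notation.
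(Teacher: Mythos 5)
Your proof is correct and matches the paper's implied approach: the paper does not write out a proof of this corollary, but it signals earlier that $A(A+1)$ is a special case of $A+f(A)$ via the logarithmic change of variables with $h(x)=\log(e^x+1)$, which is precisely your argument, and the second inequality follows from Theorem~\ref{thm:main} with $f=g=h$ exactly as you say. Your sign bookkeeping --- pigeonholing into the regions $(-\infty,-1)$, $(-1,0)$, $(0,\infty)$ and replacing $h$ by $\log(1-e^x)$ or $\log(e^x-1)$ where needed (both strictly concave, so still admissible) --- is a detail the paper glosses over, and you handle it correctly.
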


Finally, by combining techniques used in the proof of Theorem \ref{thm:main} with the method of Roche-Newton and Warren, we can give an improvement and generalisation of Theorem \ref{AA+Aoriginal}.

\begin{theorem}\label{thm: AB+A}
 Let $A,B \subseteq \mathbb R$ be finite sets with $|A| \sim |B|$. Then we have
 \[ |AB+A| \gtrsim |A|^{\frac{3}{2} + \frac3{170}}.\]
 \end{theorem}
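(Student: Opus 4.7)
The plan is to follow the strategy of Roche-Newton and Warren \cite{ollyaudie} for Theorem~\ref{AA+Aoriginal} in the asymmetric setting $|AB+A|$, strengthening their key energy estimate with the Xue-style regularisation that drives the proof of Theorem~\ref{thm:main}. Write $T = AB + A$ and set
\[
Q := \#\{(a_1,b_1,c_1,a_2,b_2,c_2) \in (A\times B\times A)^2 : a_1 b_1 + c_1 = a_2 b_2 + c_2\}.
\]
The standard Cauchy-Schwarz step $|T| \geq (|A|^2|B|)^2 / Q$ reduces the theorem to showing $Q \lesssim |A|^{9/2 - 3/170}$, using $|A|\sim|B|$.

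A convenient reformulation is
\[
Q \;=\; \sum_{s} r_{A-A}(s)\, r_{AB-AB}(s),
\]
exposing $Q$ as a pairing between the additive structure of $A$ and the additive structure of $AB$. The crude Cauchy-Schwarz bound $Q \leq \sqrt{E^+(A)\,E^+(AB)}$ combined with trivial estimates for the two energies only recovers $|T|\gtrsim |A|^{3/2}$, so a finer analysis is needed. The plan is to dyadically decompose the range of $r_{A-A}$, pigeonhole to the dominant scale, and on that scale bound the contribution to $\sum_s r_{AB-AB}(s)$ via a Szemer\'edi-Trotter incidence estimate together with a Pl\"unnecke-Ruzsa bound on the product set $AB$.

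The argument of Roche-Newton-Warren at this point would yield the exponent $3/2 + 1/194$; the further improvement to $3/2 + 3/170$ comes from inserting the Xue-type regularisation before applying Szemer\'edi-Trotter. As in Theorem~\ref{thm:main}, this regularisation extracts positive-proportion subsets $A' \subseteq A$ and $B' \subseteq B$ on which the relevant richnesses are nearly uniform, enabling a sharper use of the incidence bound and a cleaner application of Pl\"unnecke-Ruzsa to $A'B'$.

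The main obstacle is the optimisation of the exponent: the regularisation, the dyadic pigeonhole, and the Szemer\'edi-Trotter estimate each introduce a parameter, and obtaining the specific improvement $3/170$ requires these to be balanced carefully, as in \cite{ollyaudie} and in the proof of Theorem~\ref{thm:main}. A secondary issue is the asymmetry between the roles of $A$ and $B$: mixed product sets $A \cdot AB$ and $AB \cdot B$ appear in the Pl\"unnecke-Ruzsa step, but the hypothesis $|A|\sim|B|$ keeps these of the expected size, so the optimisation remains structurally analogous to the symmetric case.
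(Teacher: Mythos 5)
Your proposal diverges from the paper's argument in structure, and contains a concrete error at the outset. The identity $Q = \sum_s r_{A-A}(s)\,r_{AB-AB}(s)$ is false as stated: the correct decomposition is $Q = \sum_s r_{A-A}(s)\sum_{y-x=s} r_{AB}(x)\,r_{AB}(y)$, where $r_{AB}(z) = |\{(a,b)\in A\times B: ab = z\}|$ counts with multiplicity. This can be much larger than $r_{AB-AB}(s)$, which treats $AB$ as a set, so the reduction to "$Q\lesssim |A|^{9/2-3/170}$" is not correctly set up. More fundamentally, the paper never reduces to a bound on the collision count $Q$ at all, and a single Cauchy-Schwarz of this kind followed by Szemer\'edi-Trotter cannot see the dichotomy that actually delivers the exponent.

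The paper's proof establishes two complementary lower bounds on $|AB+A|$ and takes their maximum. Bound~1 applies the line-energy incidence theorem (Theorem~\ref{thm:incidence}, built from Rudnev--Shkredov and Roche-Newton--Rudnev, and strictly stronger than raw Szemer\'edi--Trotter here) to lines $y=ax+a'$ with $(a,a')\in A^2$ and the grid $B\times(AB+A)$, yielding $|A|^{7/3}\lesssim |AB+A|\,\E_4^\times(A)^{1/6}$, which is strong when the multiplicative energy of $A$ is small. Bound~2 applies the multiplicative form of Proposition~\ref{p:decomp} to produce $A_2\subseteq A_1\subseteq A$ with $\E_3^\times(A_1,A)\E_3^+(A_2,U)\lesssim |A|^5|U|^2$, then runs Proposition~\ref{prop: energy general} (the auxiliary energy bound counting solutions to $a-b=(a+c)-(b+c)$) with $C=\lambda A_2$ for $\lambda\in B$, giving $|A+\lambda A|^{19}\gtrsim \E_3^\times(A_1,A)^{11}/|A|^{14}$; since $A+\lambda A\subseteq AB+A$, this is strong when the multiplicative energy is large. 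Balancing at $\E_3^\times(A_1,A)\sim |A|^{331/85}$ gives the exponent $129/85 = 3/2+3/170$. Your sketch omits both the incidence theorem and Proposition~\ref{prop: energy general} --- which are the two load-bearing tools --- and substitutes an unexplained Pl\"unnecke--Ruzsa step that does not appear in the argument. Without these, the "balancing" you defer to cannot produce the stated exponent.
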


\subsection*{Techniques}
Here we give an overview of the techniques that we use, hinting at the aspects of our method that are most amenable to future improvements.
These techniques can be summarised as follows:
\begin{enumerate}
    \item The Szemerédi-Trotter theorem gives good bounds on $\E_3^+(A,B)$, especially if we have data of the form $r_{QR}(a)\geq T$ for each $a\in A$. Similarly, 
    the Szemerédi-Trotter theorem gives good bounds on $\E_3^+(f(A),B)$ for a convex function $f$, if we have data of the form $r_{Q-R}(a)\geq T$ for each $a\in A$.
    \item Using a regularisation result, we can find a subset $C\subseteq A$ so that $|C|\gtrsim |A|$ and for which we have the additive data $r_{Q-R}(a) \geq T$ for each $c\in C$.
    \item We can count solutions $(a,b,c)$ to a tautological equation of the form $a-b = (a + c) - (b + c)$, where we insist that $a-b,a + c$ are in certain (different) sets via third moment energy bounds. This gives an auxiliary energy bound, see Proposition \ref{prop: energy general} below.
    \item A corollary of the regularisation result (see Corollary \ref{c:E_3(B,V)E_3(f(C),U)} below) allowing us to upper bound certain products of energies, together with this auxiliary energy bound, leads to the result.
    
\end{enumerate}

Underlying many results about expander sets in $\mathbb{R}$ (with few variables) is the Szemerédi-Trotter theorem. It is common knowledge that the Szemerédi-Trotter theorem is particularly strong for finding bounds on the third moment energy $\E_3^+(A,B)$, an idea first introduced by Schoen and Shkredov \cite{schoenshkredov}. This, in part, is due to the `trick' that every element of $A$ can be written as a product of elements of $AA$ and $A$ in at least $|A|$ ways: $a = (ab)/b$ for any choice of $b \in A$ (we assume here that $0\notin A$). However, if one has additional multiplicative structure on $A$, say $r_{QR}(a)\geq T$ for each $a\in A$ and some auxiliary sets $Q $ and $R$ and a number $T$, one can use this information in place of the aforementioned `trick'. This gives a third moment energy bound in terms of $Q,R$ and $T$, the strength of which depends on the strength of the multiplicative information. This is the idea behind the so-called \emph{Szemerédi-Trotter sets} introduced by Shkredov \cite{shkredovconvex}, for which the notation $d^+(A)$ (and variants thereof) is used.  We note that an analogue of this idea takes place in $\mathbb{F}_p$ using the point-line incidence bound of Stevens-de Zeeuw \cite{sophiefrank} in place of the Szemerédi-Trotter theorem, which naturally produces a bound on the fourth moment energy. For a convex function $f$, this trick changes as follows: we can obtain bounds on $\E_3^+(f(A),B)$ if we have \emph{additive} structure on $A$, say $r_{Q-A}(a)\geq T$ for all $a\in A$.

To benefit from the `enhanced energy trick' described above, we need the appropriate data on $Q, R$ and $T$. A generic technique for this, first described in \cite{energy} and refined in \cite{shkredovbw}, yields a subset $C\subseteq A$ with suitable parameters: that is, if $\E_3^+(A)\sim |D_t|t^3$ for some $D_t\subseteq A-A$, then $r_{D_t - A}(c)\geq |D_t|t|A|^{-1}$, and $|C|\geq |D_t|t|A|^{-1}$.  A recent expository lemma of Xue \cite{xue} enhances the strength of this result, to enable one to take $|C|\gtrsim |A|$ - we use an adaptation of this regularisation result. 



We conclude this section by considering where improvements to these techniques may be found. Certainly for the real numbers, there is hope that one could find a more optimised subset of $A$, with the data on $Q,R$ and $T$ optimised for the specific applications within our paper. Indeed, such a `better subset' is present in the current bounds for the sum-product problem \cite{MishaSophie}). In \cite{MishaSophie}, an elementary, somewhat geometric, argument justifies the existence of the subset used in the context of the sum-product problem. 

The third item of our list might also be improved as follows: we bound the number of solutions to $a-b = (a+c)- (b+c)$ in terms of the third moment energy. During this argument, we use Cauchy - Schwarz to bound a factor of $\E^+_{3/2}(A,\cdot)$ which appears as a by-product of H\"{o}lder's inequality. However, it may be possible to directly bound $\E^+_{3/2}(A,\cdot)$ using other methods. For example, if $A$ is a convex set, then Solymosi and Ruzsa \cite{solymosiruzsa} show that $\E^+_{3/2}(A,B) \ll |A+B|^{3/2}$ for any set $B$. 

In the proof of Theorem \ref{thm: AB+A}, we (implicitly) turn to the recent technique of studying the \emph{line energy} (see e.g. \cite{affinegroup4, ollyaudie}). We would not be surprised if future developments of this concept provide further tools relevant to the results in this paper.

\section{Preliminaries}
We use the notation $r_{Q-R}(a)$ to denote the number of representations of the element $a$ as a product from $Q-R$, that is, $r_{Q-R}(a) = |\{(q,r)\in Q\times r: q-r = a\}|$, and similarly for $r_{QR}(a)$ etc. The $k$th moment additive energy between sets $A$ and $B$ is defined to be 
\[
\E_k^+(A,B):= \sum_{x\in A-B}r_{A-B}^k(x)
\]
for $k\geq 1$. If $A=B$ we simply write $\E_k^+(A)$. Similarly, we define the multiplicative energy $\E_k^\times(A,B):= \sum_x r_{A/B}^k(x)$. 

\subsection{Energy Bounds via Szemer\'{e}di - Trotter}
Before beginning the proofs, we require some technical lemmas. The first gives a bound for the additive energy of two sets $A$ and $B$, subject to multiplicative information on the set $A$, and can be found in \cite{MishaSophie}. We give the proof for completeness, noting that the proof for Lemma \ref{lem: energy convex} follows from a similar argument. 

  \begin{lemma}\label{lem:estimatinge3}
Let $A, B, C,\, Q,R\subset \R$ be finite sets with  the property that $r_{QR}(a)\geq T$ for all $a\in A$ and some $T\geq 1$.
Then if $|R||C|\ll \left(|Q||B|\right)^2,$
\begin{equation} \label{e:bdone}
|\{c=a-b;\,a\in A, b\in B,\,c\in C\}| \ll \frac{(|Q||R||B||C|)^{2/3}}{T}\,.
\end{equation} 
Furthermore, if $|R||A|\leq |Q|^2|B|,$

\begin{equation}
    \label{e:e3(a,b)}
    \E_3 (A,B) \ll \frac{|Q|^2 |R|^2|B|^2}{T^3}\log|A|\,.
\end{equation}
\end{lemma}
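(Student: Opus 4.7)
The plan is to translate both bounds into the language of point-line incidences, where the Szemer\'{e}di-Trotter theorem applies cleanly. For \eqref{e:bdone}, I would exploit the multiplicative data on $A$: each representation $a = qr$ turns the equation $c = a - b$ into $b = rq - c$. Thus for each $(r, c) \in R \times C$, the line $\ell_{r,c} \colon y = rx - c$ in the $(q,b)$-plane is incident to $(q, b) \in Q \times B$ exactly when $(a, b, c)$ is a counted triple with $a = qr$. Each triple therefore contributes at least $T$ incidences, and Szemer\'{e}di-Trotter yields
\[
T \cdot |\{(a,b,c)\}| \;\leq\; I\big(Q \times B,\ \{\ell_{r,c}\}\big) \;\ll\; (|Q||R||B||C|)^{2/3} + |Q||B| + |R||C|.
\]
The hypothesis $|R||C| \ll (|Q||B|)^2$ is precisely what makes the cubic-root term dominate $|R||C|$; the remaining linear term $|Q||B|$ can be absorbed via a trivial bound such as $|\{(a,b,c)\}| \leq |B||C|$, or folded into the $\ll$ symbol.

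For \eqref{e:e3(a,b)}, I would dyadically decompose
\[
\E_3(A,B) = \sum_{x \in A-B} r_{A-B}(x)^3 \;\ll\; \log|A| \cdot |D_{t^\ast}| (t^\ast)^3,
\]
where $D_t := \{x \in A-B : r_{A-B}(x) \sim t\}$ and $t^\ast$ is the dominant dyadic level. Taking $C := D_{t^\ast}$ in \eqref{e:bdone}, the triple count on the left equals $\sum_{c \in D_{t^\ast}} r_{A-B}(c) \sim |D_{t^\ast}| t^\ast$, so
\[
|D_{t^\ast}| t^\ast \;\ll\; \frac{(|Q||R||B||D_{t^\ast}|)^{2/3}}{T},
\]
and rearranging gives $|D_{t^\ast}|(t^\ast)^3 \ll (|Q||R||B|)^2 / T^3$, which is the desired bound up to the logarithmic factor. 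The required hypothesis $|R||D_{t^\ast}| \ll (|Q||B|)^2$ follows from the stated condition $|R||A| \leq |Q|^2|B|$ combined with the crude bound $|D_{t^\ast}| \leq |A - B| \leq |A||B|$, since then $|R||D_{t^\ast}| \leq |R||A||B| \leq |Q|^2|B|^2$.

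The main obstacle will be the careful bookkeeping of the lower-order Szemer\'{e}di-Trotter terms in the first bound, i.e., verifying that $|R||C| \ll (|Q||B|)^2$ really does isolate the cubic-root main term against both linear contributions. Once that is in hand, the dyadic decomposition in the second part is standard and the calculation carries through mechanically.
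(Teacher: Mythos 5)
Your proof is correct and follows essentially the same route as the paper: reduce the triple count to an incidence count between lines $y=rx-c$ and points $Q\times B$ via the $T$-fold popularity hypothesis, apply Szemer\'edi--Trotter with the constraint $|R||C|\ll(|Q||B|)^2$ to keep the two-thirds term in charge, then dyadically decompose $A-B$ and take $C=D_{t^\ast}$ to extract the third-moment bound. In fact your bookkeeping is slightly cleaner than the paper's: you use the consistent line/point split that actually produces the lower-order term $|Q||B|$, and you verify the dominance hypothesis for $D_{t^\ast}$ as $|R||D_{t^\ast}|\ll(|Q||B|)^2$ directly from $|R||A|\leq|Q|^2|B|$, where the paper's text inadvertently swaps $Q$ and $R$.
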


The next lemma bounds the additive energy of two sets $f(A)$ and $B$, where $f$ is a convex function, subject to additive information on the set $A$.

\begin{lemma} \label{lem: energy convex}
 Let $A \subset \R$ be finite, and let $f$ be a convex (or concave) function. Suppose that there exist finite sets $Q, R\subseteq \R$ with $|Q|\geq |R|$ and a number $T\geq 1$ so that $r_{Q-R}(a)\geq T$ for all $a\in A$. Then for any set $B$ satisfying $|R||A|\ll  |Q|^2|B|$, we have
\[
\E_3^+(f(A),B) \ll \frac{|Q|^2|R|^2 |B|^2}{T^3}\log|A|\,.
\]
\end{lemma}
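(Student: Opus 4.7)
The plan is to mirror the proof of Lemma~\ref{lem:estimatinge3}, with the multiplicative factorisations $a = qr$ replaced by the additive factorisations $a = q - r$, and the pencil of lines replaced by a family of translates of $f$. As a first step I would apply a dyadic pigeonholing to the sum defining $\E_3^+(f(A), B)$ to reduce matters to bounding $\tau^3 |P_\tau|$ for a single scale, where $P_\tau := \{x \in f(A) - B : r_{f(A) - B}(x) \sim \tau\}$; summing over the $O(\log|A|)$ dyadic scales produces the logarithmic loss in the claim.

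Next I would set up an incidence problem with points $\cP := Q \times B$ and a family of curves $\cC := \{\gamma_{r, x} : (r, x) \in R \times P_\tau\}$, where $\gamma_{r, x} := \{(q, b) : b = f(q - r) - x\}$. Every incidence $(q, b) \in \gamma_{r, x}$ records a quadruple with $f(q - r) - b = x$, and for each $x \in P_\tau$ the additive data on $A$ forces at least $\tau T$ such quadruples: the $\geq \tau$ representations $f(a) - b = x$ each admit $\geq T$ factorisations $a = q - r$, and different quadruples yield different triples $(q, b, r)$. Hence $I(\cP, \cC) \geq \tau T |P_\tau|$.

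The essential geometric observation is that $\cC$ is a pseudoline family, so the \ST theorem (in the pseudoline version of Pach and Sharir) applies. For $r_1 \neq r_2$ the intersection of $\gamma_{r_1, x_1}$ and $\gamma_{r_2, x_2}$ is determined by $f(q - r_1) - f(q - r_2) = x_1 - x_2$, whose derivative $f'(q - r_1) - f'(q - r_2)$ has constant nonzero sign by the strict monotonicity of $f'$; hence at most one value of $q$ is a solution. For $r_1 = r_2$ the two curves are vertical translates of each other, and thus disjoint unless identical. This pseudoline verification is the one substantially new ingredient compared with Lemma~\ref{lem:estimatinge3}.

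Finally, the \ST theorem yields
\[
I(\cP, \cC) \lesssim (|Q||B|)^{2/3}(|R||P_\tau|)^{2/3} + |Q||B| + |R||P_\tau|,
\]
and comparing with $I \geq \tau T |P_\tau|$ in the regime where the $(|\cP||\cC|)^{2/3}$ term dominates gives $\tau^3 |P_\tau| \lesssim |Q|^2 |R|^2 |B|^2 / T^3$, as required. The hypothesis $|R||A| \ll |Q|^2|B|$, combined with the trivial bound $|P_\tau| \leq |f(A) - B| \leq |A||B|$, ensures $|R||P_\tau| \ll (|Q||B|)^2$ and so the third \ST term is dominated by the first; the term $|Q||B|$ is handled as in the proof of Lemma~\ref{lem:estimatinge3} using the further trivial bounds $\tau \leq |A|$ and $|A|T \leq |Q||R|$. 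The anticipated main obstacle is precisely this case analysis for the lower-order \ST terms, ensuring the hypothesis $|R||A| \ll |Q|^2|B|$ is deployed correctly to control them; the rest of the argument is templated on the proof of Lemma~\ref{lem:estimatinge3}.
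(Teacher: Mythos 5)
Your proposal is essentially the same approach the paper sketches: the authors explicitly state that Lemma~\ref{lem: energy convex} ``follows from a similar argument'' to Lemma~\ref{lem:estimatinge3}, and your dyadic decomposition, your count $I(\cP,\cC)\geq \tau T|P_\tau|$, and your application of a pseudoline \ST bound are exactly the intended template. The pseudoline verification for the family $\gamma_{r,x}$ is the one genuinely new ingredient, and you have done it correctly; this is the content the paper leaves implicit.

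The one place where the write-up glosses over a real issue is the lower-order term $|Q||B|$. The two bounds you cite, $\tau\leq|A|$ and $|A|T\leq|Q||R|$, only give $\tau T\leq|Q||R|$, and if you try to close the case where $|Q||B|$ dominates the \ST bound you are led to want $\tau^2 T^2 \ll |Q||R|^2|B|$, which from $\tau T\leq |Q||R|$ alone would require $|Q|\lesssim|B|$ --- not among the hypotheses. To close this case you also need the bounds $\tau\leq\min(|A|,|B|)$ and, crucially, $T\leq|R|$ (which is where the hypothesis $|Q|\geq|R|$ is used). Then $\tau T\leq |R|\min(|Q|,|B|)$: if $|Q|\leq|B|$ use $\tau T\leq |A|T\leq|Q||R|$, and if $|B|\leq|Q|$ use $\tau T\leq|B||R|$. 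In either case $\tau^2T^2\leq |Q||B||R|^2$, and combined with $\tau T|P_\tau|\ll|Q||B|$ this gives $\tau^3|P_\tau|\ll|Q|^2|R|^2|B|^2/T^3$ as needed. So your instinct that this case analysis is ``the main obstacle'' is right, but the bounds you list are not quite the full set required; be explicit about $T\leq|R|$ and the case split on $|Q|$ versus $|B|$. (For what it is worth, the paper's own exposition of the lower-order terms in the proof of Lemma~\ref{lem:estimatinge3} is also garbled --- the claimed error term $|Q||B|$ does not match the stated point/line setup, and the verified constraint does not match the stated one --- so this is a shared rough edge, not a flaw unique to your argument.)
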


We remark that we have stated Lemmas~\ref{lem:estimatinge3} and  \ref{lem: energy convex} as a \emph{third} energy bound. The same technique with an additional interpolation argument gives us $k$th moment energy bounds, see e.g. \cite{MishaSophie} for details. 

\begin{proof}
To prove the first bound, we note that by utilising the information on the sets $Q$ and $R$, we have
$$|\{c = a-b: a \in A, b \in B, c \in C \} | \leq \frac{1}{T}|\{c = qr-b: q \in Q, r \in R, b \in B, c \in C \} |,$$

which can be viewed as incidences between the set of lines $L$ given by $y = qx - c$ for $(q,c) \in Q \times C$, and the point set $P = R \times B$. Applying the Szemer\'{e}di - Trotter theorem, we have
$$|\{c = qr-b: q \in Q, r \in R, b \in B, c \in C \} | = I(P, L) \ll \left(|Q||R||B||C| \right)^{2/3} + |Q||B|.$$

Because of the constraint present in the statement of the lemma, the leading term dominates. We therefore have
$$|\{c = a-b: a \in A, b \in B, c \in C \} | \ll \frac{\left(|Q||R||B||C| \right)^{2/3}}{T}$$
as needed. 

For the second part of the lemma, we decompose the support of  $\E_3^+(A,B)$ into dyadic groups: for $i= 0, \dots \lfloor \log|A|\rfloor$, let $D_i:=\{d\in A-B: r_{A-B}(d)\in [2^i,2^i)\}\subseteq A-B$. 
Then
\[
\E_3^+(A,B) = \sum_{i =0}^{\lfloor|A|\rfloor} \sum_{d\in D_i} r_{A-B}^3 (d) < \sum_i |D_i|2^{3i+3} \ll \log|A|\max_i |D_i|2^{3i}\,.
\]
With $D_i$ playing the role of $C$ in \eqref{e:bdone}, we have
\[
2^i |D_i|\leq |\{(a,b,d)\in A\times B \times D_i: d = a-b \}| \ll \frac{(|Q||R||B||D_i|)^{2/3}}{T}\,.
\]
The result then follows, and all that is left to do is to verify the condition required, for $C = D_i$, i.e. that $|Q||D_i| \ll \left(|R||B|\right)^2$. Note that since $D_i \subseteq A - B$, this is certainly true if we have
$$|Q||A||B| \ll \left(|R||B|\right)^2 \iff |Q||A| \ll |R|^2|B|$$
which is the stated condition.
\end{proof}

\subsection{Regularisation Results}
In this section we give some regularisation results required for the proof. The first is a lemma present in \cite{MishaSophie}. This lemma will be used to give a certain subset of $A$ on which much of the energy is supported, and with certain popularity properties.

\begin{lemma}\label{lem:regu}
Let ${\mathcal{R}_\epsilon}$ be a deterministic rule with parameter $\epsilon \in (0,1)$ that, to every sufficiently large finite additive set $X$, associates a subset ${\mathcal{R}_\epsilon}(X)\subseteq X$ of cardinality $|{\mathcal{R}_\epsilon}(X)|\geq (1-\epsilon)|X|$.

For any such rule ${\mathcal{R}_\epsilon}$, any $m>1$ and a sufficiently large finite set $A$,  set $\epsilon = c_1\log^{-1}(|A|)$ for some  $c_1\in (0,1)$. 
Then there exists a set $B\subseteq A$ (depending on ${\mathcal{R}_\epsilon}, \,m$), with $|B|\geq (1-c_1) |A|$ such that 
\[\E_m^+(\mathcal{R}_\epsilon(B))\geq c_2\,\E_m^+(B)\,,\]
 for some constant  $c_2=c_2(m, c_1)$ in $(0,1]$. \end{lemma}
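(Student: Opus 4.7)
The plan is to iterate the rule $\mathcal{R}_\epsilon$ and look for a step where the $m$-th moment energy fails to drop by more than a constant factor. Essentially, since $\E_m^+$ is bounded between $|X|^m$ and $|X|^{m+1}$, it cannot decrease by more than a factor of $|A|$ over the course of the iteration, so at most $\log|A|/\log(1/c_2)$ successive factor-$c_2$ drops are possible. The role of the logarithmically small parameter $\epsilon=c_1/\log|A|$ is exactly to permit $\sim \log|A|$ iterations while preserving positive proportion, producing the contradiction.

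Concretely, I would set $B_0=A$ and $B_{i+1}=\mathcal{R}_\epsilon(B_i)$. By hypothesis $|B_{i+1}|\geq (1-\epsilon)|B_i|$, so for $L=\lfloor\log|A|\rfloor$ iterations Bernoulli's inequality gives $|B_L|\geq (1-L\epsilon)|A|\geq (1-c_1)|A|$. In particular every $|B_i|$ is a positive proportion of $|A|$ and is therefore sufficiently large for the rule $\mathcal{R}_\epsilon$ to be applied again, provided $A$ was chosen large enough in terms of $c_1$ and the threshold implicit in $\mathcal{R}_\epsilon$.

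Now suppose for contradiction that $\E_m^+(B_{i+1})<c_2\,\E_m^+(B_i)$ for every $0\leq i<L$. Telescoping yields $\E_m^+(B_L)<c_2^{L}\,\E_m^+(A)$. Combine this with the two trivial bounds
\[
\E_m^+(A)\leq |A|^{m+1},\qquad \E_m^+(B_L)\geq r_{B_L-B_L}(0)^{m}=|B_L|^m\geq (1-c_1)^m|A|^m,
\]
the first from $r_{A-A}(d)\leq |A|$ together with $\sum_d r_{A-A}(d)=|A|^2$, the second from the zero-difference contribution. Together these force $c_2^{L}>(1-c_1)^m/|A|$, i.e.\ $L\log(1/c_2)<\log|A|+m\log(1/(1-c_1))$. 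Substituting $L=\lfloor\log|A|\rfloor$ gives $\log(1/c_2)<1+O(1/\log|A|)$, which fails for any fixed $c_2<1/e$ once $|A|$ is sufficiently large in terms of $c_1,m$. Thus some $i<L$ must satisfy $\E_m^+(\mathcal{R}_\epsilon(B_i))\geq c_2\,\E_m^+(B_i)$, and $B:=B_i$ is the desired subset; one may take $c_2=c_2(m,c_1)$ to be any constant strictly greater than $1/e$, for instance $c_2=e^{-2}$.

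There is no deep obstacle here; the whole argument is a balancing act between two trivial facts. The point is that $\epsilon=c_1/\log|A|$ is calibrated so that one can afford exactly $\Theta(\log|A|)$ iterations while keeping $|B_L|\geq (1-c_1)|A|$, and this is precisely the number of factor-$c_2$ drops accommodated by the polynomial gap between the trivial bounds $|X|^m$ and $|X|^{m+1}$. A larger $\epsilon$ would fail to supply enough iterations; a smaller one would be unnecessarily wasteful. The mildly delicate bookkeeping is ensuring that $c_2$ depends only on $c_1$ and $m$ and not on $|A|$, which is why one cannot hope for $c_2$ to approach $1$ in this method, only $c_2>1/e$.
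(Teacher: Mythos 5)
The paper does not prove this lemma at all; it is quoted verbatim from \cite{MishaSophie} ("The first is a lemma present in \cite{MishaSophie}"), so there is no in-paper argument to compare against. Your proof is essentially the canonical argument for such a regularisation statement, and it is correct: iterate the rule, observe that if the $m$th energy always drops by a factor $c_2$ then after $L=\lfloor\log|A|\rfloor$ steps the two trivial bounds $\E_m^+(A)\leq|A|^{m+1}$ and $\E_m^+(B_L)\geq|B_L|^m\geq(1-c_1)^m|A|^m$ are squeezed into a contradiction, and the choice $\epsilon=c_1/\log|A|$ is exactly what permits $L$ iterations while keeping $|B_L|\geq(1-c_1)|A|$ via Bernoulli. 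The one blemish is the final paragraph, where you describe the admissible range of $c_2$ backwards: your derivation shows the contradiction for any fixed $c_2$ \emph{strictly less than} $1/e$ (or more precisely with $\log(1/c_2)>1$ by a fixed margin), not "strictly greater than $1/e$"; the concrete value $c_2=e^{-2}$ you offer is indeed below $1/e$ and consistent with the computation, so this is a typographical slip rather than a mathematical error. You also correctly flag the dependency on the implicit "sufficiently large" threshold for the rule $\mathcal{R}_\epsilon$; since every $B_i$ retains a positive proportion of $|A|$, that is handled by taking $|A|$ large enough in terms of $c_1$ and that threshold, as you say.
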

 
We also require the following proposition. It is very similar to an expository lemma of Xue \cite[Lemma~5.1]{xue}, but has been amended to admit an asymmetric form. We present the rather technical proof of this proposition in the appendix, where we make the dependence on $\log(|A|)$ and $k$ hidden in the notation explicit. 

\begin{proposition}\label{p:decomp}
Let $A, V$ be finite subsets of $\mathbb R$, let $k> 1$ be a real number and fix $c_1 \in (0,1)$. 

Then there are sets $B, C$ with $C \subseteq B\subseteq A$ and $|C|\gtrsim_{k,c_1} |B|\geq (1-c_1) |A|$ 
such that the following property holds: there is a number $1\leq t\leq |B|$ and a set $D_t=\{x\in B-V:\, t\leq r_{B-V}(x)<2t\}$ such that 
\[
\E_k^+(B,V) \sim_k |D_t|t^k
\]
and 
\[
r_{D_t+V}(c) \sim_k \frac{|D_t|t}{|V|}
\]
for any $c\in C$.
\end{proposition}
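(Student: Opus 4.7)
The plan is to adapt the iterative regularisation technique of Xue \cite{xue} to the asymmetric bilinear energy $\E_k^+(\cdot,V)$. The main tool would be a bilinear analogue of Lemma~\ref{lem:regu}, combined with a deterministic rule that discards from a set $X$ any point at which the representation function at the dominant dyadic scale of $\E_k^+(X,V)$ is atypical.

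First I would set up the rule $\mathcal{R}_\epsilon$. Given $X \subseteq A$, dyadic decomposition on the support of $\E_k^+(X,V)$ with $D_j(X) = \{d \in X - V : 2^j \leq r_{X-V}(d) < 2^{j+1}\}$ and pigeonhole (with some canonical tie-breaking) select a scale $t_X$ for which $|D_{t_X}(X)| t_X^k \gtrsim \E_k^+(X,V)/\log|X|$. The identity $\sum_{c \in X} r_{D_{t_X}(X)+V}(c) = \sum_{d \in D_{t_X}(X)} r_{X-V}(d)$ shows that the average of $r_{D_{t_X}(X)+V}(c)$ over $c \in X$ is of order $\bar r_X := |D_{t_X}(X)| t_X / |X|$. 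The rule $\mathcal{R}_\epsilon(X)$ retains the $c \in X$ whose value $r_{D_{t_X}(X)+V}(c)$ lies in a prescribed multiplicative window $[\alpha\bar r_X, \beta\bar r_X]$: the upper tail contains at most $|X|/\beta$ points by Markov, while the mass contributed by the lower-tail points is $\leq \alpha|D_{t_X}(X)| t_X$, a small fraction of the total. Tuning $\alpha,\beta$ in terms of $\epsilon$ (together with the extra observation that the dyadic structure $r_{X-V}(d) < 2t_X$ on $D_{t_X}$ also controls the mass on the upper tail) yields $|\mathcal{R}_\epsilon(X)| \geq (1-\epsilon)|X|$.

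Setting $\epsilon = c_1/\log|A|$ and invoking the bilinear analogue of Lemma~\ref{lem:regu} then produces a set $B \subseteq A$ with $|B| \geq (1-c_1)|A|$ and $\E_k^+(\mathcal{R}_\epsilon(B), V) \gtrsim_k \E_k^+(B, V)$. Taking $t := t_B$, $D_t := D_{t_B}(B)$, and $C := \mathcal{R}_\epsilon(B)$ then delivers the proposition: the dyadic choice gives $\E_k^+(B,V) \sim_k |D_t| t^k$, the rule forces $r_{D_t + V}(c)$ into the prescribed window for every $c \in C$ (matching the stated value $|D_t|t/|V|$ up to the implicit $k$-constants), and $|C| \geq (1-\epsilon)|B| \gtrsim_{k,c_1} |B|$ by the rule's quantitative guarantee.

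The main obstacle I anticipate is twofold. First, one must justify the bilinear extension of Lemma~\ref{lem:regu}: its proof ought to be iterative, setting $X_{i+1} = \mathcal{R}_\epsilon(X_i)$ with $X_0 = A$, halting as soon as the energy drops by at most a factor of $c_2$, and terminating within $O(\log|A|)$ steps because $\E_k^+(X,V) \leq |V|^{k+1}|X|$ is polynomially bounded. Second—and more delicately—because $t_X$ and $D_{t_X}(X)$ are refreshed at every iteration, one needs to check that at the stable point the window defined by $\bar r_B$ is still the one the rule keeps; and one needs genuine control of the \emph{mass} on the upper tail (not only the count), which seems to require exploiting the dyadic bound $r_{B-V}(d) < 2t$ on $D_t$ rather than naively invoking Markov. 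These pieces of bookkeeping form the technical core of the argument.
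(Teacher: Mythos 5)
Your high-level plan (iterate a deterministic rule built from the dominant dyadic scale of $\E_k^+(\cdot,V)$, then recognize a stable subset) matches the paper's strategy, but two of the specific choices you make would break the argument.

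\textbf{The two-sided window rule cannot be a positive-proportion rule.} You propose that $\mathcal{R}_\epsilon(X)$ retain only those $c\in X$ with $r_{D_{t_X}+V}(c)\in[\alpha\bar r_X,\beta\bar r_X]$, and then to apply Lemma~\ref{lem:regu}, which requires $|\mathcal{R}_\epsilon(X)|\geq(1-\epsilon)|X|$ for every sufficiently large $X$. The upper tail is indeed small by Markov, but the lower tail is not: the points $c$ with $r_{D_{t_X}+V}(c)<\alpha\bar r_X$ contribute little \emph{mass}, as you observe, yet their \emph{number} can be as large as $(1-o(1))|X|$ (almost all of $X$ could carry tiny $r$-values, with the bulk of the mass concentrated on a sparse popular set). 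The dyadic constraint $r_{X-V}(d)\in[t_X,2t_X)$ on $D_{t_X}$ controls a different representation function — it bounds how often each $d$ is hit, not how often each $c$ hits $D_{t_X}+V$ — so it gives no handle on the count in the lower tail of $r_{D_{t_X}+V}$. Hence your rule cannot in general satisfy the hypothesis of Lemma~\ref{lem:regu}, and the proof stalls at the first step. The paper's rule is deliberately one-sided: it discards only the \emph{heavy} points $r_{D_t+V}(a)>|\mathcal{P}_{\tilde A}|/(\varepsilon|\tilde A|)$, which is trivially a $(1-\varepsilon)$-proportion rule, and the lower bound on $r_{D_t+V}(c)$ is extracted \emph{after} the iteration terminates, by a separate Markov-type argument applied to $\mathcal{R}_\varepsilon(B)$. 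The resulting $C$ then has size $\gtrsim\varepsilon|B|\sim c_1|B|/\log^2|A|$, not $(1-\epsilon)|B|$; this is still $\gtrsim_{k,c_1}|B|$ because $\gtrsim$ hides logarithms.

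\textbf{Energy-based stopping is insufficient; one must stop on the support.} You want to invoke a bilinear analogue of Lemma~\ref{lem:regu}, whose stopping criterion is energy preservation $\E_k^+(\mathcal{R}_\epsilon(B),V)\gtrsim\E_k^+(B,V)$. But the dominant dyadic scale is recomputed at every step, and total energy being preserved does not imply that the specific set $D_t(B)$ still carries a constant fraction of $\mathcal{P}_B$ inside $\mathcal{R}_\epsilon(B)$ — the energy may migrate to a different scale. Without control on the support, the post-termination Markov argument that extracts $C$ with $r_{D_t+V}(c)\gtrsim|D_t|t/|B|$ has nothing to grip onto. The paper therefore runs the iteration with the stopping condition $|G_i|\geq 2^{-k}|\mathcal{P}_{A_i}|$, i.e.\ that the \emph{support} $\mathcal{P}_{A_i}$ is mostly retained by $\mathcal{R}_\varepsilon(A_i)$; when this fails, the discarded $k$-tuples wipe out a $2^{-k}\log^{-1}|A|$ fraction of the energy, which is what drives the $I\lesssim c_1\varepsilon^{-1}$ termination bound via the a priori range $|A||V|\leq\E_k^+\leq|A|^k|V|$. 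This distinction (energy-stopping versus support-stopping) is exactly the ``subtlety'' the paper flags when contrasting Lemma~\ref{lem:regu} with Proposition~\ref{p:decomp}, and it is the missing idea in your write-up. Your closing remarks gesture at these difficulties, but neither the one-sided rule nor the support-based stopping criterion appears in your plan as stated, so the argument as written does not close.
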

 
On a high level, the proofs of Lemma~\ref{lem:regu} and Proposition~\ref{p:decomp} follow the same schemata: given a set $A$, we define a rule which extracts a positive proportion subset $A'\subseteq A$ with desirable properties according to the rule in question.  In Lemma~\ref{lem:regu}, this rule is abstract, whereas in Proposition~\ref{p:decomp} it is explicit. We then iterate this procedure until some stopping condition is satisfied. In Lemma~\ref{lem:regu}, this stopping condition is relative to the $m$th energy; in Proposition~\ref{p:decomp}, the stopping condition is defined with respect to the \emph{support} of the $k$th energy. These two regularisation results differ primarily because of this subtlety. Finally, we argue that this procedure must terminate in an acceptable number of steps, thus eventually outputting a positive proportion subset $B\subseteq A$. 

Proposition \ref{p:decomp} admits the following corollary, which is similar to a result of Shakan \cite[Theorem 1.10]{shakan}.

\begin{corollary}\label{c:E_3(B,V)E_3(f(C),U)}
Let $A, V\subseteq \R$ be finite, and $f$ be a convex (or concave) function. Then there are sets $B,C$ with $C\subseteq B\subseteq A$ and $|C|\gtrsim_k|B|\gg_k |A|$ such that 
\[
\E_3^+(B,V)\E_3^+(f(C),U)\lesssim |U|^2|V|^2|A|^3
\] 
for any set $U$ with $|U||V|\gg |A|$.
\end{corollary}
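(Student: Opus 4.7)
The plan is to apply Proposition \ref{p:decomp} to regularise $A$, producing a positive-proportion subset on which Lemma \ref{lem: energy convex} can bound $\E_3^+(f(\cdot), U)$ at exactly the dyadic scale that dominates $\E_3^+(B, V)$. Multiplying the two bounds then causes a clean cancellation of the common factor.

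I begin by applying Proposition \ref{p:decomp} with $k = 3$ and some fixed $c_1 \in (0, 1)$ to the pair $(A, V)$. This yields sets $C \subseteq B \subseteq A$ with $|C| \gtrsim |B| \gtrsim |A|$, a scale $1 \leq t \leq |B|$, and a level set $D_t = \{x \in B - V : r_{B-V}(x) \sim t\}$ satisfying $\E_3^+(B, V) \sim |D_t|\, t^3$ together with the uniform popularity $r_{D_t + V}(c) \sim |D_t|\, t / |V|$ for every $c \in C$. Next, I put this additive information into the form required by Lemma \ref{lem: energy convex} by observing $r_{D_t + V}(c) = r_{D_t - (-V)}(c)$, and apply the lemma with $Q = D_t$, $R = -V$ (or $Q = V$, $R = -D_t$ if $|V| > |D_t|$, yielding the same numerical bound), $T \sim |D_t|\, t/|V|$, and with the ``$B$''-set of the lemma taken to be $U$. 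The lemma's size hypothesis reduces to $|V||C| \ll |D_t|^2 |U|$, which is easily verified using $|U||V| \gg |A|$ in the non-trivial regime (degenerate regimes make the target bound immediate). We obtain
\[
\E_3^+(f(C), U) \lesssim \frac{|D_t|^2 |V|^2 |U|^2}{T^3} = \frac{|V|^5 |U|^2}{|D_t|\, t^3}.
\]

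Multiplying by $\E_3^+(B, V) \sim |D_t|\, t^3$ cancels the $|D_t|\, t^3$ factor exactly, and the target inequality $\E_3^+(B, V)\, \E_3^+(f(C), U) \lesssim |U|^2 |V|^2 |A|^3$ then follows after elementary bookkeeping on the remaining $|V|$ dependence: summing the popularity bound $r_{D_t + V}(c) \sim |D_t|t/|V|$ over $c \in C$ against the total mass $\sum_c r_{D_t+V}(c) \leq |D_t||V|$ gives $t \lesssim |V|^2/|A|$, which reconciles the leftover $|V|^3$ with $|A|^3$. The main obstacle, and the reason we invoke Proposition \ref{p:decomp} rather than a plain dyadic pigeonhole, is precisely this alignment of scales: we need the popularity on $C$ to hold at the \emph{same} dyadic level $(t, D_t)$ that controls $\E_3^+(B, V)$, so that the $|D_t|\, t^3$ factors cancel in the product and the subset $C$ retains a positive proportion of $A$. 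A naive dyadic decomposition would lose a logarithm per scale and produce only a logarithmically small subset, breaking both features simultaneously.
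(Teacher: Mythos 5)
Your strategy mirrors the paper's: apply Proposition~\ref{p:decomp} with $k=3$, feed the resulting popularity $r_{D_t+V}(c)\geq T$ into Lemma~\ref{lem: energy convex} with $\{Q,R\}$ playing $D_t$ and $\pm V$, and multiply so that the $|D_t|t^3$ factors cancel. The gap lies in the value of $T$ you fed in.

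You set $T\sim|D_t|t/|V|$, as in the displayed statement of Proposition~\ref{p:decomp}. The paper's own proof of this corollary, and the restated proposition and its proof in the appendix, both use $T\gg|D_t|t/|B|$; the $|V|$ in the main-text display of the proposition is a misprint for $|B|$. Indeed the $|V|$-version cannot hold as stated: your own constraint $t\lesssim|V|^2/|A|$, obtained by summing the claimed popularity over $c\in C$ against the total mass $|D_t||V|$, would force $t<1$ whenever $|V|<\sqrt{|A|}$, and more simply $|D_t|t/|V|$ can exceed the trivial ceiling $r_{D_t+V}(c)\leq\min(|D_t|,|V|)$ when $|V|$ is small.

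As a result your cancellation gives
\[
\E_3^+(B,V)\,\E_3^+(f(C),U)\lesssim|V|^5|U|^2\,,
\]
which matches the target $|U|^2|V|^2|A|^3$ only when $|V|\lesssim|A|$. The closing appeal to ``elementary bookkeeping'' via $t\lesssim|V|^2/|A|$ is a non-sequitur: once $|D_t|t^3$ has cancelled exactly, no $t$ remains in the product for that bound to act on, so it cannot convert the residual $|V|^3$ into $|A|^3$.

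Substituting the correct $T\gg|D_t|t/|B|$ repairs the proof with no other change: Lemma~\ref{lem: energy convex} gives
\[
\E_3^+(f(C),U)\lesssim\frac{|D_t|^2|V|^2|U|^2}{(|D_t|t/|B|)^3}=\frac{|V|^2|U|^2|B|^3}{|D_t|t^3}\sim\frac{|V|^2|U|^2|A|^3}{\E_3^+(B,V)}\,,
\]
and multiplying through by $\E_3^+(B,V)\sim|D_t|t^3$ yields the statement for all $V$. Everything else in your proposal --- the rewriting $r_{D_t+V}=r_{D_t-(-V)}$, swapping $Q$ and $R$ by size, and verifying the Szemer\'edi--Trotter size hypothesis $\min(|D_t|,|V|)|C|\lesssim\max(|D_t|,|V|)^2|U|$ from $|U||V|\gg|A|$ --- is correct and coincides with the paper's proof.
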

\begin{proof}
We apply Proposition~\ref{p:decomp} with $k = 3$ to obtain the sets $B$ and $C$, so that $\E_3^+(B,V)\sim |D_t|t^k$ where $r_{B-V}(d)\in [t,2t)$ for all $d\in D_t$ and $r_{D_t + V}(c)\gg |D_t|t|B|^{-1}$ for all $c\in C$.

We are able to obtain a bound on $\E_3^+(f(C),U)$ using Lemma~\ref{lem: energy convex}: 
\[
\E_3^+(f(C),U) \lesssim \frac{|D_t|^2|V|^2|U|^2}{|D_t|^3t^3|B|^{-3}} \sim \frac{|V|^2|U|^2|A|^3}{\E_3^+(B,V)}\,.
\]
We remark that since $|U||V|\gtrsim |A|$, it follows that $\min(|D_t|,|V|)|C|\lesssim \max(|D_t|,|V|)^2|U|$ and so we may indeed apply Lemma~\ref{lem: energy convex}.
\end{proof}

\section{Auxiliary energy bounds}
A unifying idea behind the proofs in this paper is the following proposition:

\begin{proposition}\label{prop: energy general}
Let $A,C\subseteq \mathbb{R}$ be finite, and $k\geq 1$. Suppose that $\E_k^+(A) \sim |D|\Delta^k$ for some $D\subseteq A-A$ and $\Delta \geq 1$, where $r_{A-A}(d) \in [\Delta,2\Delta)$. Then we have
\begin{equation}
    \label{e:prop eq}
|D|^9 \Delta^{12} \lesssim \frac{ |A + C|^6 \E_3^+(A)^4\E_3^+(C)^2\E_3^+(A,D) \E_3^+(C,A + C)^2}{|C|^{18}|A|^3}\,.
\end{equation}
\end{proposition}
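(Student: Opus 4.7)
The strategy I would pursue is to double-count the quantity
$$N := |\{(a, b, c) \in A \times A \times C : a - b \in D\}|.$$
Since $r_{A-A}(d) \geq \Delta$ for every $d \in D$, one immediately obtains the lower bound $N \geq |C|\,|D|\,\Delta$, and so $N^9 \geq |C|^9 |D|^9 \Delta^9$. The missing factor $\Delta^3$ on the left-hand side is then supplied by the hypothesis $\E_3^+(A) \sim |D|\Delta^3$, which lets us write $\Delta^3 \sim \E_3^+(A)/|D|$. Consequently, proving the proposition reduces to establishing the upper bound
$$N^9 \;\lesssim\; \frac{|D|\,|A+C|^6\,\E_3^+(A)^3\,\E_3^+(C)^2\,\E_3^+(A, D)\,\E_3^+(C, A+C)^2}{|C|^9\,|A|^3}.$$

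To obtain this upper bound, I would use the tautology $a - b = (a+c) - (b+c)$ and rewrite $N$, with $x = a+c$ and $y = b+c$ in $P := A + C$, as
$$N = \sum_{\substack{x, y \in P,\, c \in C \\ x - y \in D}} 1_A(x - c)\, 1_A(y - c).$$
The plan is then to bound this sum via a cascade of Cauchy--Schwarz and H\"older inequalities, organised as follows. For fixed $(x,y) \in P \times P$, the multiplicity $w(x,y) := |\{c \in C : x-c,\, y-c \in A\}|$ satisfies $w(x,y) \leq \sqrt{r_{A+C}(x)\, r_{A+C}(y)}$, and Cauchy--Schwarz over pairs $(x,y)$ with $x-y \in D$ should produce the factors $|A+C|^6$ and $\E_3^+(C, A+C)^2$. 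Expanding $w(x,y)^2$ introduces a shift $s = c_1 - c_2 \in C - C$, and a further Cauchy--Schwarz on the $s$-sum weighted by $r_{C-C}(s)$ produces $\E_3^+(C)^2$. The residual inner sum over $A \cap (A+s)$ together with its shift by elements of $D$ is controlled by $\E_3^+(A)$ via the popularity assumption $r_{A-A}(d) \in [\Delta, 2\Delta)$ on $D$, and the mixed energy $\E_3^+(A, D)$ is supplied independently by the standard H\"older bound $M^3 \leq |A|^2\,\E_3^+(A, D)$ applied to $M = \sum_{d \in D} r_{A-A}(d) = \sum_{a \in A} 1_A(a)\, r_{A+D}(a)$.

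The main obstacle is the careful bookkeeping needed to coordinate these Cauchy--Schwarz and H\"older applications so that the exponents on the various energies combine precisely as prescribed on the right-hand side. Each application of Cauchy--Schwarz introduces a square root, so iterating three times gives a cube root, which becomes a cube upon raising the whole inequality to the ninth power --- this is the reason for the unusual exponent profile $(9, 12)$ on the left and the pattern of cubes $\E_3^+(\cdot)$ on the right. Tailoring the H\"older exponents across the $c$-, $s$-, and $(x,y)$-variables to deliver exactly the combination $\E_3^+(A)^3\, \E_3^+(C)^2\, \E_3^+(A, D)\, \E_3^+(C, A+C)^2 \cdot |A+C|^6$ in the numerator (with the denominator $|C|^9|A|^3$ arising from trivial normalisations) is the most delicate aspect of the argument, but it does not require any new ideas beyond those already introduced.
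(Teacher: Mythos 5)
Your high-level strategy is the paper's own: you count solutions to the tautology $a-b=(a+c)-(b+c)$ and feed the count into an energy argument. But there is a genuine gap, and it is not merely an unfinished calculation.

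The key idea in the paper's proof that is absent from your sketch is the \emph{popularity restriction} $a+c\in P(A,C)$, together with the regularisation step (Lemma~\ref{lem:regu}) applied to the rule $A\mapsto A'$ where $A'$ is the set of $a$ for which $a+c$ is popular for $\geq |C|/2$ choices of $c$. This is not a cosmetic refinement: without it, the equivalence-class count
\[
|\{(d,s_1,s_2)\in D\times (A+C)\times (A+C):\ d=s_1-s_2\}|
\]
involves only $A+C$, not the sets $A$ and $C$ separately, and there is no way to make the mixed energy $\E_3^+(A,D)$ appear. The paper's move is to use the popularity of $s_1$ to replace $s_1$ by a pair $(a,c)\in A\times C$ at a cost of roughly $|A||C|/|A+C|$; \emph{only then} does a factor $r_{D-A}$ appear, from which H\"older extracts $\E_3^+(A,D)^{1/6}(|A||D|)^{1/2}$. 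Your alternative of injecting $\E_3^+(A,D)$ ``independently'' via the bound $M^3\leq |A|^2\E_3^+(A,D)$, with $M=\sum_{d\in D}r_{A-A}(d)$, does not feed back into your claimed $N^9$ inequality in any way you have indicated, and it is lossy precisely when $r_{A+D}$ is far from uniform on $A$. The regularisation is also needed for the lower bound: once you require $a+c$ popular, you only get $N\gtrsim |C||D|\Delta$ if $D$ is a popular level set of a subset $A'$ on which most of the energy lives, which is exactly what Lemma~\ref{lem:regu} supplies. Your sketch never mentions either $P(A,C)$ or Lemma~\ref{lem:regu}.

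There is also a secondary inaccuracy worth flagging. You write ``$\E_3^+(A)\sim|D|\Delta^3$,'' but the hypothesis of the proposition is $\E_k^+(A)\sim|D|\Delta^k$ for a general $k\geq 1$ (and in the applications $k=2$ and $k=12/7$ are used). For general $k$ one only has the one-sided inequality $|D|\Delta^3\leq \E_3^+(A)$. As it happens, this one-sided inequality is all that your reduction needs (multiplying $|D|^8\Delta^9\lesssim\cdots$ by $|D|\Delta^3\leq\E_3^+(A)$ does produce $|D|^9\Delta^{12}$ with an extra $\E_3^+(A)$ on the right), so the reduction step is salvageable; but the justification as written is wrong. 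Finally, the claimed intermediate inequality $N^9\lesssim |D||A+C|^6\E_3^+(A)^3\E_3^+(C)^2\E_3^+(A,D)\E_3^+(C,A+C)^2\,|C|^{-9}|A|^{-3}$ is not the $9/2$-power of the paper's Cauchy--Schwarz bound on $N^2$ (the exponents on $|A+C|$, $|D|$, $|A|$, and the energies do not match), and your sketch of ``a cascade of Cauchy--Schwarz and H\"older'' never pins down how $w(x,y)\leq\sqrt{r_{A+C}(x)r_{A+C}(y)}$ leads to $\E_3^+(C,A+C)^2$ without the popularity device. You should restructure the argument as in the paper: regularise first, then Cauchy--Schwarz on equivalence classes, bounding the two resulting factors separately.
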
 
The stated form of Proposition~\ref{prop: energy general} gives us a great deal of flexibility. For example, if we had multiplicative information on the set $A$ in the guise of Lemma~\ref{lem: energy convex} -- that is, if $r_{QR}(a)\geq T$ for all $a\in A$ -- then we obtain an energy estimate in terms of this data. 
Proposition~\ref{prop: energy general} also admits a multiplicative form, in which $\E^\times_k(A) \sim |D|\Delta^k$. Then all instances of $\E_3$ in \eqref{e:prop eq} should be replaced by $\E_3^\times$, and $A+C$ by $AC$.
\begin{proof}
We begin by defining the popular set
\[P(A,C):= \left\{ x \in A+C : r_{A+C}(x) \geq \frac{|A||C|}{\log|A||A+C|} \right\}.\]

We also define the set 
\[A' :=\left\{ a\in A : |\{ c \in C : a+c \in P(A,C) \}| \geq \frac{|C|}{2} \right\}.\]

We perform a refinement step at the beginning of the proof, making use of Lemma \ref{lem:regu}. We claim that Lemma \ref{lem:regu} can be applied with the deterministic rule being the subset $A' \subseteq A$ defined above. Firstly we prove that $|A'|$ is large with respect to $|A|$. We have
 \begin{align*}\sum_{a \in A'}|\{ c \in C : a+c \in P(A,C) \}| + \sum_{a \in A \setminus A'}|\{ c \in C : a + c \in P(A,C) \}| &= |\{ (a,c) \in A\times C : a+c \in P(A,C) \}| \\
 & \geq \left( 1 - \frac{1}{\log|A|}\right) |A||C|.
 \end{align*}
 By setting $|A \setminus A'| = c|A|$ and using the bounds
 $$\sum_{a \in A'}|\{ c \in C : a + c \in P(A,C) \}| \leq (1-c)|A||C|$$
 $$ \sum_{a \in A \setminus A'}|\{ c \in C : a + c \in P(A,C) \}| \leq \frac{c}{2}|A||C|$$
 we conclude that $|A'| \geq \left( 1 - \frac{2}{\log|A|}\right) |C|$. We can therefore apply Lemma \ref{lem:regu} at the outset of the proof, obtaining a set $A'$ as above with the property that $|A'| \gtrsim |A|$, and $\E_k^+(A') \sim \E_k^+(A)$.
 
 We now consider the number of solutions $(a,b,c) \in A^2 \times C$ to the trivial equation
\begin{equation}\label{trivialeq} a-b = (a+c) - (b+c)\end{equation}
where the difference $a-b$ comes from the set $D \subseteq A'-A'$ such that $|D|\Delta^k \sim\E_k^+(A') \sim\E_k^+(A)$, and such that the sum $a + c$ is popular, that is, $a+c \in P(A,C)$.
 
 There are at least $\Omega( |C||D|\Delta)$ solutions to equation \eqref{trivialeq}. We partition solutions to \eqref{trivialeq} with the relevant conditions, via the following:
 $$(a,b,c) \sim (a+ t,b+ t,c-t), \quad t \in \mathbb R\,,$$
 and let $[a,b,c]$ represent this equivalence class. Since $t$ cancels out in equation \eqref{trivialeq},  these classes are non-trivial. 
 
 Let $N$ denote the number of solutions to equation \eqref{trivialeq}. 
 We have
 $$|C||D|\Delta \ll N = \sum_{[a,b,c]}|[a,b,c]|$$
and so, after an application of the Cauchy-Schwarz inequality, we obtain
 \begin{equation}\label{eq:twofactors} \left(|C||D|\Delta\right)^2 \leq |\{ \text{equivalence classes} \}| \cdot \sum_{[a,b,c]}|[a,b,c]|^2.\end{equation}
 
 We now aim to bound the two factors in equation \eqref{eq:twofactors}.
 
 To bound the number of equivalence classes, note that each equivalence class gives a solution to the equation
 $$d = s_1 - s_2, \quad d \in D, s_1 \in P(A,C), s_2 \in A+B.$$
 
 Therefore we have
 $$|\{  \text{equivalence classes} \}| \leq |\{  (d,s_1,s_2) \in D \times P(A,C) \times A + C : d = s_1-s_2 \}|.$$
 
By the popularity of $s_1$, we have
  \begin{align*}|\{  \text{equivalence classes} \}| & \lesssim \frac{|A+C|}{|A||C|}|\{  (d,a,c,s) \in D \times A \times C \times A + C : d - a =  c - s \}| \\
  & =  \frac{|A + C|}{|A||C|} \sum_{x} r_{D - A}(x) r_{ C - (A+C)}(x)
  \\
& \leq  \frac{|A + C|}{|A||C|}\E_3^+(A,D)^\frac16 (|A||D|)^\frac12\E_3^+(C,A+C)^\frac13\,,
  \end{align*}
where the final bound is an application of H\"older's inequality, followed by Cauchy-Schwarz. 

We now aim to bound the sum 
$$ \sum_{[a,b,c]}|[a,b,c]|^2$$
where is it understood that the sum is taken over equivalence classes satisfying the relevant conditions. Note that this sum counts pairs of triples from the same equivalence class, and for each pair we have
$$(a,b,c) \sim (a',b',c') \implies \exists
~t \text{ \ with \ }  a-a'= b- b'= c'- c = t.$$

We therefore have
$$ \sum_{[a,b,c]}|[a,b,c]|^2 \leq \sum_t r_{A-A}(t)^2r_{C-C}(t) \leq \E_3^+(A)^\frac23\E_3^+(C)^\frac13 $$
where again, the final inequality is a result of Hölder's estimate. 

Finally, from \eqref{eq:twofactors} we have
\[\left(|C||D|\Delta\right)^2 \lesssim 
 \frac{|A + C|}{|A||C|}\E_3^+(A,D)^\frac16 (|A||D|)^\frac12\E_3^+(C,A + C)^\frac13
 \E_3^+(A)^\frac23\E_3^+(C)^\frac13
\,.
\] 
Rearranging and raising both sides to the sixth power concludes the proof.
\end{proof}

\section{Proof of Theorem~\ref{thm:main}}
We actually prove the following, slightly more general theorem.
\begin{theorem}\label{t:|A+B||f(A)+g(B)|etc}
Let $f$ and $g$ be convex or concave functions. Let $A, B \subseteq \mathbb{R}$ be finite sets. Then we have
\begin{equation}\label{e:|A+B||f(A)+g(B)|}
|A|^{49}|B|^{49}\lesssim
|A+B|^{38} |f(A)+g(B)|^{38}
\end{equation}
and
\begin{equation}\label{e:|A+B||f(A)+g(B)||A-A|etc}
|A|^{39}|B|^{39}\lesssim |A\pm B|^{20}|f(A)\pm g(B)|^{20}|A-A|^5||B-B|^5|f(A)-f(A)|^5|g(B)-g(B)|^5\,.
\end{equation}
\end{theorem}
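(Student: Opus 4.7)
The plan is to apply Proposition~\ref{prop: energy general} twice: once to a pair of positive-proportion subsets $(A_0, B_0)\subseteq A\times B$, producing a bound featuring $|A+B|$, and once to $(f(A_0), g(B_0))$, producing a bound featuring $|f(A)+g(B)|$; then multiply and use Corollary~\ref{c:E_3(B,V)E_3(f(C),U)} to eliminate the resulting products of energies on the right-hand side. To satisfy the dyadic-level-set hypothesis of Proposition~\ref{prop: energy general} simultaneously for $A_0$ and $f(A_0)$, I would first apply Proposition~\ref{p:decomp} in turn to both $A$ (with $V = B$) and to $f(A)$ (with $V = g(B)$), iterating to extract a common subset $A_0\subseteq A$ with $|A_0|\gtrsim |A|$ for which both $\E_3^+(A_0)\sim |D_A|\Delta_A^3$ on some $D_A\subseteq A_0-A_0$ and $\E_3^+(f(A_0))\sim |\tilde D_A|\tilde\Delta_A^3$ hold; similarly produce $B_0\subseteq B$. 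Since Proposition~\ref{prop: energy general} is asymmetric in its two arguments $(A, C)$, I would further apply it with the roles of $A_0$ and $B_0$ swapped in order to symmetrise the final bound in $A$ and $B$.

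After multiplying the resulting inequalities, the right-hand side contains several products of energies of the form $\E_3^+(X, V)\cdot \E_3^+(f(X), U)$---where $X$ ranges over $\{A_0, B_0\}$ and $(V, U)$ ranges over natural pairs such as $(B, g(B))$, $(D_A, \tilde D_A)$, and $(A+B, f(A)+g(B))$---each of which is controlled by Corollary~\ref{c:E_3(B,V)E_3(f(C),U)} at $\lesssim |U|^2|V|^2|A|^3$, provided the mild side condition $|U||V|\gtrsim |A|$ holds. The $|D|$-quantities that remain are estimated from below via $|D_A|\geq \E_3^+(A_0)/|A|^3 \geq |A|^3/|A-A|^2$ (Hölder) combined with the Plünnecke bound $|A-A|\leq |A+B|^2/|B|$, and analogously for $|D_B|, |\tilde D_A|, |\tilde D_B|$. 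Collecting and simplifying then produces inequality~(\ref{e:|A+B||f(A)+g(B)|}).

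The main obstacle is the delicate exponent arithmetic required to land on exactly $49$ and $38$: one must balance the asymmetry of Proposition~\ref{prop: energy general} against the choices of $U, V$ in the corollary and use the sharpest available lower bounds for the $|D|$-sets. A secondary concern is verifying the side condition $|U||V|\gtrsim |A|$ (or $|B|$) in each invocation of the corollary, which holds in every typical regime but may need a separate argument in degenerate cases; a tertiary concern is ensuring the regularisation step actually produces a single common subset on which both dyadic decompositions are usable, which is where the flexibility of Proposition~\ref{p:decomp} (that it permits an arbitrary rule) is crucial.

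For the second inequality~(\ref{e:|A+B||f(A)+g(B)||A-A|etc}), I would follow the same outline through the double application of Proposition~\ref{prop: energy general}, but in place of Corollary~\ref{c:E_3(B,V)E_3(f(C),U)} invoke the trivial energy bound $\E_3^+(X, Y)\leq |X-Y|\cdot \min(|X|,|Y|)^3$ (and its $+$ counterpart) on each energy term. This trades each factor of $|A|^3$ (or $|B|^3$) produced by the corollary for a factor of the appropriate difference set, introducing $|A-A|, |B-B|, |f(A)-f(A)|, |g(B)-g(B)|$ each at exponent $5$ and $|A\pm B|, |f(A)\pm g(B)|$ at exponent $20$, and yielding the stated inequality. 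This flexibility---trading the Szemerédi-Trotter input for the trivial one---is exactly what allows Corollary~\ref{cor:convex diff} to match the Schoen-Shkredov and Rudnev-Stevens bounds in the few-differences regime.
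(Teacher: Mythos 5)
Your blueprint captures the right high-level shape --- four applications of Proposition~\ref{prop: energy general} (to $A_0, B_0, f(A_0), g(B_0)$ with the respective ``$C$'' swapped), multiply, and use Corollary~\ref{c:E_3(B,V)E_3(f(C),U)} to control the resulting products of energies --- which is exactly the paper's structure. But several of the moves you describe depart from the paper's proof, and some of them do not work.

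\textbf{The regularisation step.} You propose to run Proposition~\ref{p:decomp} ``in turn'' on $A$ and on $f(A)$ and then take a common subset $A_0$ on which both dyadic decompositions hold. Two positive-proportion subsets produced independently need not intersect in a positive-proportion set (the implied constants in $|C|\gtrsim_k|A|$ can be well below $1/2$), so this step has a genuine gap unless you nest. The paper avoids the issue by nesting applications of Corollary~\ref{c:E_3(B,V)E_3(f(C),U)}: first to $A$ (with $V=A$, not $V=B$), yielding $A_2\subseteq A_1\subseteq A$ and the estimate $\E_3^+(A_1,A)\E_3^+(f(A_2),U)\lesssim |A|^5|U|^2$ for \emph{any} $U$; then to $f(A_2)$ (with $V=f(A)$, function $f^{-1}$), yielding $A_4\subseteq A_3\subseteq A_2$ and $\E_3^+(f(A_3),f(A))\E_3^+(A_4,U)\lesssim |A|^5|U|^2$. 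It is only \emph{after} obtaining $A_4$ that one dyadically decomposes $\E_2^+(A_4)$ to feed Proposition~\ref{prop: energy general} (which requires only a single pigeonholed level set, not the full regularity property of Proposition~\ref{p:decomp}). The crucial feature you would lose in your version is that in the corollary, one argument is fixed ($A$ or $f(A)$) while the other, $U$, is free; this is what lets you later slot in $U = D$ or $U = A+B$, which are not known at the time the corollary is invoked. Your plan, which pairs $\E_3^+(A_0, D_A)$ against $\E_3^+(f(A_0),\tilde D_A)$ with both level sets $V$-arguments, is chicken-and-egg.

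\textbf{Recovering the exponents.} You propose to lower-bound $|D_A|$ via H\"older and then Pl\"unnecke-Ruzsa, $|A-A|\leq|A+B|^2/|B|$. The paper does \emph{not} use Pl\"unnecke, and for a quantitative reason: the paper multiplies the product inequality $\prod_i |D_i|^7 t_i^{12}\lesssim|A+B|^{20}|f(A)+g(B)|^{20}|A|^9|B|^9$ by $(t_1t_2t_3t_4)^2$, uses $|D_1|t_1^3|D_3|t_3^3\lesssim|A|^7$ to control $t_1t_3$ (similarly $t_2t_4$), rewrites the left side as $\prod_i\E_2^+(X_i)^9$, and closes with the Cauchy--Schwarz inequality $\E_2^+(X)\E_2^+(Y)\geq |X|^4|Y|^4/|X+Y|^2$. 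This final step contributes only $|A+B|^{18}|f(A)+g(B)|^{18}$. If you instead lower-bound $|D_A|^9\Delta_A^{12}$ via $\E_3^+(A_0)\geq |A|^6/|A-A|^2$ and Pl\"unnecke, each factor contributes $|A+B|^{-36}$ (and $|f(A)+g(B)|^{-36}$ for the convex pieces). Carrying the arithmetic through gives a strictly weaker exponent than $38$. The paper's second-moment trick is not an optional sharpening; it is necessary to hit the stated bound.

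\textbf{The second inequality.} Here the gap is conceptual. You propose to ``trade the Szemer\'edi--Trotter input for the trivial one'', replacing Corollary~\ref{c:E_3(B,V)E_3(f(C),U)} with $\E_3^+(X,Y)\leq|X-Y|\min(|X|,|Y|)^3$. This discards the incidence-theoretic gain entirely: the corollary gives $\E_3^+(A_4,A)\E_3^+(f(A_4),f(A))\lesssim|A|^7$, whereas the trivial bound gives $|A-A||f(A)-f(A)||A|^6$, which is worse by a factor of $|A-A||f(A)-f(A)|/|A|\geq|A|$. You would not land near the exponents in \eqref{e:|A+B||f(A)+g(B)||A-A|etc}. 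What the paper actually does is keep the corollary in place and change the \emph{moment} of the dyadic decomposition from $2$ to $12/7$; the difference sets $|A-A|^5$ etc.\ then appear on the right via H\"older's lower bound $\E_{12/7}^+(X)^7\geq|X|^{24}/|X-X|^5$, not via a trivial upper bound on the third moment energy. So the difference sets enter through the \emph{left}-hand side of the product inequality, not by degrading the energy estimates on the right.
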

We clarify that in this theorem, one may take $f$ to be convex, and $g$ to be concave.

On a high level, the proof proceeds by apply  two iterations of Corollary~\ref{c:E_3(B,V)E_3(f(C),U)} to $A$ and $B$ with judicious choices of $V$ in each case. Then we apply Proposition~\ref{prop: energy general} to the ensuing subsets and their convex (resp. concave) counterparts. This gives an additive energy relation.
We obtain the statements of Theorem~\ref{t:|A+B||f(A)+g(B)|etc} using Cauchy-Schwarz and H\"older inequalities. 

Let us make two simple observations regarding the third moment energy of a set $X$ that we use in the subsequent argument.
 Firstly, note that $\E_3^+(-X,X-X) = \E_3^+(X,X-X)$ due to the symmetry of the difference set. Secondly, for any set $Y\subseteq X$ and any set $Z$, we have $\E_3^+(Y,Z) \leq \E_3^+(X,Z)$.
\begin{proof}
Here we prove the slightly more technical statement~\eqref{e:|A+B||f(A)+g(B)|}, and indicate the changes necessary to prove \eqref{e:|A+B||f(A)+g(B)||A-A|etc}.

We begin by applying Corollary~\ref{c:E_3(B,V)E_3(f(C),U)} to the set $A$ with $V = A$ to obtain sets
$A_2\subseteq A_1 \subseteq A$ with $|A_2|\gtrsim |A_1|\gg |A|$
and 
\[E_3(A_1,A)E_3(f(A_2),U) \lesssim |A|^5 |U|^2 \text{ for any }U\,.\]
Note that if $|U|\gg 1$, then this follows from Corollary \ref{c:E_3(B,V)E_3(f(C),U)}; if  $|U|\ll 1$, then it follows trivially.

We now apply the concave analogue of Corollary~\ref{c:E_3(B,V)E_3(f(C),U)}, this time to the set $f(A_2)$ with $V =f(A)$ and the function $f^{-1}$. We obtain \footnote{Strictly speaking, we obtain sets $f(A_4)\subseteq f(A_3)\subseteq f(A_2)$.} the sets $A_4\subseteq A_3\subseteq A_2$ with $|A_4|\gtrsim |A_3|\gg |A_2|\gtrsim |A|$
 so that 
\[E_3(f(A_3), f(A)) E_3(A_4,U) \lesssim |A|^5 |U|^2 \text{ for any }U\,.
\]

We repeat this argument for the set $B$ taking $V  = B$ to obtain $B_2\subseteq B_1\subseteq B$
so that
\[E_3(B_1,B)E_3(g(B_2),U) \lesssim |B|^5 |U|^2 \text{ for any }U\,,\]
and then once more to $g(B_2)$ with $V = g(B)$ and function $g^{-1}$ to obtain 
$B_4\subseteq B_3\subseteq B_2$ with $|B_4|\gtrsim |B|$ and
\[E_3(g(B_3), g(B)) E_3(B_4,U) \lesssim |B|^5 |U|^2 \text{ for any }U\,.\]

To prove \eqref{e:|A+B||f(A)+g(B)|}, we dyadically decompose the sets $A_4,B_4,f(A_4),g(B_4)$ according to the second moment energy to obtain sets $D_i$ and numbers $t_i\geq 1$  so that
\[
\E_2^+(A_4) \sim |D_1|t_1^2, \quad \E_2^+(B_4) \sim |D_2|t_2^2\]
\[\E_2^+(f(A_4)) \sim |D_3|t_3^2, \quad \E_2^+(g(B_4)) \sim |D_4|t_4^2\,.\]
 
To prove \eqref{e:|A+B||f(A)+g(B)||A-A|etc}, we would instead dyadically decompose the sets $A_4, B_4, f(A_4), g(B_4)$  according to the $12/7$th moment energy, so that e.g. $\E^+_{12/7}(A_4) \sim |D_1|t_1^{12/7}$. Note that e.g. $D_1 \subseteq A_4 - A_4$. 

We now apply Proposition~\ref{prop: energy general} to each of the sets $A_4,B_4,f(A_4),g(B_4)$, choosing $C$ in \eqref{e:prop eq} to be $B_4,A_4,g(B_4),f(A_4)$ respectively. 

We then multiply together the four instances of \eqref{e:prop eq}, and make liberal use of the simple observations noted at the beginning of this section together with the consequences of Corollary~\ref{c:E_3(B,V)E_3(f(C),U)}. This gives 
\begin{equation}\label{e:product 12/7 energy}
\prod_{1\leq i \leq 4}|D_i|^7t_i^{12} \lesssim|A+B|^{20} |f(A)+g(B)|^{20}|A|^9|B|^9\,.
\end{equation}

To prove statement \eqref{e:|A+B||f(A)+g(B)||A-A|etc}, we recall that we had initially dyadically decomposed according to the $12/7$th energy and so, after an application of H\"older's inequality for $\E^+_{12/7}(A_4)$ etc., we are done. 

To prove statement \eqref{e:|A+B||f(A)+g(B)|}, let us multiply \eqref{e:product 12/7 energy} on both sides by $(t_1t_2t_3t_4)^2$. 

Note that 
\[
|D_1|t_1^3|D_3|t_3^3 \leq \E_3^+(A_4,A)\E_3^+(f(A_3),f(A)) \lesssim |A|^7 \implies t_1t_2\lesssim \frac{|A|^7}{\E_2^+(A)\E_2^+(f(A))}
\]
and similarly
\[
(t_2t_4) \lesssim \frac{|B|^7}{\E_2^+(B)\E_2^+(g(B))}
\]\,.

Hence we obtain 
\[
\left(\E_2^+(B)\E_2^+(g(B))\E_2^+(A)\E_2^+(f(A))\right)^9 \lesssim|A+B|^{20} |f(A)+g(B)|^{20}|A|^{23}|B|^{23}\,.
\]
Finally, we use the Cauchy-Schwarz relation
\[
\frac{|X|^2|Y|^2}{|X+Y|}\leq \E_2^+(X,Y) \leq \E_2^+(X)^{1/2}\E_2^+(Y)^{1/2}
\]
to complete the proof. 
\end{proof}

\section{Proof of Theorem \ref{thm: AB+A}}

In this section we prove Theorem \ref{thm: AB+A}
proving two complementary bounds, using a combination of the methods found in \cite{rss}, \cite{MishaSophie}, and \cite{ollyaudie}. 
\subsection{Proof of Theorem~\ref{thm: AB+A} - Bound 1}
The method of Roche-Newton and Warren \cite{ollyaudie} involved studying the \emph{line energy} of lines of a particular structure. Their results, combined with an incidence theorem of Rudnev and Shkredov \cite{rudnevshkredov} and an additive combinatorial result of Roche-Newton and Rudnev\footnote{The result of Roche-Newton and Rudnev is that the number of solutions to the equation
\[
\frac{a_1 - a_2}{a_3 - a_4} = \frac{a_5 - a_6}{a_7- a_8}
\]
with each $a_i \in A$ is at most $O(|A|^6\log|A|)$.} \cite{Q}
 imply the following incidence bound. See also \cite{affinegroup4} for more information on line energy and its applications.

\begin{theorem}
\label{thm:incidence}
Let $L$ be a set of lines of the form $y = ax + a'$ for $a,a' \in A \subseteq \mathbb R\setminus\{0\}$ a finite set. 
Let $B,C \subseteq \mathbb R$ be two finite sets. Then we have
\[\mathcal{I}(B \times C, L) \lesssim \E_4^\times(A)^\frac1{12}|A|^\frac76
|B|^\frac23|C|^\frac12
+ |A|^2 |C|^\frac12.
\]
\end{theorem}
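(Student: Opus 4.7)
The plan is to apply the Rudnev--Shkredov point-line incidence theorem \cite{rudnevshkredov}, whose main term features the \emph{line energy} of $L$, and then to estimate this line energy for our specific $L$ by exploiting the fact that each line is parameterised by a pair in $A \times A$. The fourth-moment multiplicative energy $\E_4^\times(A)$ enters via a H\"older step isolating the slopes, while the Roche-Newton--Rudnev ratio bound noted in the footnote controls the remaining additive part.

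First I would apply the Rudnev--Shkredov incidence theorem to $P = B \times C$ and $L$. Their bound produces a main term of the shape $\E^*(L)^{1/6}$ multiplied by an appropriate product of powers of $|B|, |C|$ and $|L| = |A|^2$, together with lower-order terms. The additive summand $|A|^2 |C|^{1/2}$ in the conclusion will arise from one such lower-order term, after the Cartesian-product structure of $P$ is exploited and a Cauchy--Schwarz step in the $C$ variable is performed. Next I would bound the line energy itself: two lines $y = a_i x + a_i'$ and $y = a_j x + a_j'$ meet at the point with $x$-coordinate $(a_j' - a_i')/(a_i - a_j)$, so $\E^*(L)$ counts $8$-tuples in $A^8$ satisfying the ratio equation
\[
\frac{a_2' - a_1'}{a_1 - a_2} \;=\; \frac{a_4' - a_3'}{a_3 - a_4}.
\]
To pull out the $\E_4^\times(A)$ refinement rather than settling for the crude Roche-Newton--Rudnev bound of $O(|A|^6\log|A|)$ on the total count, I would regroup solutions according to the slope ratio $\lambda = a_1/a_2 = a_3/a_4$ coming from the affine-group interpretation and apply H\"older's inequality on the $\lambda$-sum with exponent $2$. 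This extracts $\bigl(\sum_\lambda r_{A/A}(\lambda)^4\bigr)^{1/2} = \E_4^\times(A)^{1/2}$, leaving a residual sum that counts (essentially) solutions of $b_1 - b_3 = \lambda(b_2 - b_4)$ with $b_i \in A$, which is in turn controlled by Roche-Newton--Rudnev or a simpler additive energy estimate.

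The main obstacle is calibrating the exponents correctly: after the H\"older step the line energy bound should take the form $\E^*(L) \lesssim \E_4^\times(A)^{1/2} \cdot |A|^{\gamma}$ for a sharp value of $\gamma$, and, once fed through the sixth root in Rudnev--Shkredov, must produce precisely $\E_4^\times(A)^{1/12}|A|^{7/6}|B|^{2/3}|C|^{1/2}$. The delicate point is pairing the H\"older step with the Roche-Newton--Rudnev bound so that the residual additive factor contributes the smallest possible power of $|A|$, rather than plugging in the crude ratio count at the end of the chain; equivalently, one must avoid spurious losses of $|A|$ when translating from the $8$-tuple ratio count back to the line energy. A secondary bookkeeping issue is verifying that the $|A|^2 |C|^{1/2}$ additive error emerges cleanly from the Cartesian-product variant of Rudnev--Shkredov; this should be a routine consequence of a one-variable Cauchy--Schwarz in the $y$-direction, but the asymmetry between the $B$ and $C$ exponents in the final bound warrants care.
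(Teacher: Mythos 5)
The paper does not give its own proof of Theorem~\ref{thm:incidence}; it records the statement as a consequence of the Rudnev--Shkredov incidence theorem \cite{rudnevshkredov}, the Roche-Newton--Warren line-energy bound \cite{ollyaudie}, and the Roche-Newton--Rudnev estimate \cite{Q} for the $8$-tuple ratio equation. Your proposal assembles exactly these three ingredients in the intended order, so the route agrees with the paper's.

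One step in the middle is misstated, and it matters if the sketch is to become a proof. The line energy fed into the Rudnev--Shkredov theorem is the affine-group energy
\[
\E(L) = \left|\left\{(\ell_1,\ell_2,\ell_3,\ell_4) \in L^4 : \ell_1 \ell_2^{-1} = \ell_3 \ell_4^{-1}\right\}\right| = \sum_{\lambda} r_{A/A}(\lambda)^2 \, \E^+(A,\lambda A)\,,
\]
obtained by writing $\ell = (a,a')$ and parametrising by the common slope-ratio $\lambda = a_1/a_2 = a_3/a_4$. It is \emph{not} the number of $8$-tuples solving the intersection-point identity $\frac{a_2' - a_1'}{a_1 - a_2} = \frac{a_4' - a_3'}{a_3 - a_4}$, as you assert; under that description the constraint $a_1/a_2 = a_3/a_4$ that you subsequently ``regroup by'' is not even present. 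The Roche-Newton--Rudnev ratio equation enters only \emph{after} Cauchy--Schwarz on the $\lambda$-sum: $\E(L) \leq \E_4^\times(A)^{1/2}\bigl(\sum_\lambda \E^+(A,\lambda A)^2\bigr)^{1/2}$, and the residual $\sum_\lambda \E^+(A,\lambda A)^2$ counts solutions to $\frac{b_1 - b_3}{b_2 - b_4} = \frac{b_5 - b_7}{b_6 - b_8}$ with all eight $b_i\in A$; this is what \cite{Q} bounds by $O(|A|^6\log|A|)$, and a ``simpler additive energy estimate'' would not give the needed $|A|^3$. This yields $\E(L) \lesssim \E_4^\times(A)^{1/2}|A|^3$, and substituting into the Cartesian-product incidence bound $\mathcal{I}(B\times C, L) \lesssim |B|^{2/3}|C|^{1/2}\E(L)^{1/6}|L|^{1/3} + |L||C|^{1/2}$ with $|L| = |A|^2$ produces exactly the stated exponents. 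You do eventually reach the Cauchy--Schwarz step and cite \cite{Q}, so the confusion is localised rather than structural, but the identification of $\E^*(L)$ as written is wrong and must be replaced by the affine-group energy before the regrouping by $\lambda$ makes sense.
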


We shall apply Theorem \ref{thm:incidence} to the point set $B \times (AB+A)$ and to the set of lines $L$ of the form $y = ax + a'$ with $a,a' \in A$. Note that without loss of generality we may remove $0$ from $A$ if it is present. For each line $y = ax + a'$, for each $b \in B$ the point $(b,ab+a')$ lies on this line, and so we have at least $|A|^2|B| \sim |A|^3$ incidences. Using Theorem \ref{thm:incidence} we obtain 

\begin{equation}\label{Incidence1}
    |A|^3 \lesssim\mathcal I(B \times (AB+A),L) \lesssim
     \E_4^\times(A)^\frac1{12}|A|^\frac76
|B|^\frac23|AB+A|^\frac12
+ |A|^2 |AB+A|^\frac12.
\end{equation}

Note that if the second term dominates we have a much stronger result than claimed in the statement in Theorem~\ref{thm: AB+A}. Let us therefore assume the first term dominates. Hence we have the first of our two bounds: 
\begin{equation}\label{e:bound1}
|A|^{7/3}\lesssim |AB+A|\E_4^\times(A)^{1/6} \leq |AB+A||A|^{\frac{1}{6}} \E_3^\times(A)^{1/6}.
\end{equation}

\subsection{Proof of Theorem~\ref{thm: AB+A} - Bound 2}
To find the second bound, let us apply the multiplicative version of Proposition~\ref{p:decomp} to the set $A$ with $V = A$  to obtain sets $A_2\subseteq A_1\subseteq A$ with $|A_2|\gtrsim |A_1|\gg |A|$ so that
\begin{equation}\label{e:mult version of cor}
    \E_3^\times(A_1,A)\E_3^+(A_2,U)\lesssim |A|^5|U|^2\,.
\end{equation}
Equation~\eqref{e:mult version of cor} is a consequence of using Lemma~\ref{lem:estimatinge3} in place of Lemma~\ref{lem: energy convex} in the proof of Corollary~\ref{c:E_3(B,V)E_3(f(C),U)}. 

We now apply Proposition~\ref{prop: energy general} to the set $A_2$, writing $\E^+(A_2)\sim |D|t^2$ and taking $C = \lambda A_2$ for $\lambda \neq 0$. Note that $\E_k^+(A_2,X) = \E_k^+(\lambda A_2, \lambda^{-1} X)$ for any set $X$ and any $k\geq 1$. 

From Proposition~\ref{prop: energy general}, \eqref{e:mult version of cor}, and the inclusions $A_2\subseteq A_1\subseteq A$ we obtain
\begin{equation}\label{e: bound2 penultimate}
    |D|^7 t^{12} \lesssim \frac{|A+\lambda A|^{10}}{|A|^9} \left(\frac{\E_3^+(A_2)}{|A|^2}\right)^{12} \frac{\E_3^+(A_2,D)}{|D|^2} \left(\frac{\E_3^+(A_2,\lambda^{-1}A + A}{|A+\lambda A|^2}\right)^2 \lesssim \frac{|A+\lambda A|^{10}}{|A|^9} \frac{|A|^{45}}{\E_3^\times(A_1,A)^9}\,.
\end{equation}

We have 
\[|D|t^3 \leq \E_3^+(A_2) \lesssim \frac{|A|^7}{\E_3^\times(A_1,A)} \implies t \lesssim \frac{|A|^7}{\E_3^\times(A_1,A) \E_2^+(A_2)}\,,
\]
and so multiplying \eqref{e: bound2 penultimate} by $t^2$ and applying the Cauchy-Schwarz energy bound 
\[\frac{|A_2|^4}{|A_2 + \lambda A_2|} \leq \E(A_2,\lambda A_2) \leq \E(A_2)\]
we conclude that 
\begin{equation}
    \label{e:bound 2 final}
    |A+\lambda A|^{19} \gtrsim \frac{\E_3^\times(A_1,A)^{11}}{|A|^{14}}\,.
\end{equation}

\subsection{Proof of Theorem~\ref{thm: AB+A} - Conclusion }
Combining the bounds of the previous section we obtain 
\[
|A+BA|\gtrsim \max \left(
\frac{\E_3^\times(A,A_1)^\frac{11}{19}}{|A|^{\frac{14}{19}}}
, \frac{|A|^\frac{13}{6}}{ \E_3^\times(A,A_1)^\frac16}\right)
\]
where the first bound has instead been applied to the set $A_1$ 
given above, making use of the inequalities $|A| \sim |A_1|$ and $\E_3^+(A_1) \leq \E_3^+(A,A_1)$. In the worst possible case, both maximands are equal. This happens if 
\[ \E_3^+(A_1,A)  =  |A|^\frac{331}{85}\]
and so we shall assume that this is indeed the case. We then obtain
\[ |A+BA|\gtrsim  |A|^\frac{129}{85} = |A|^{\frac32 +\frac3{170}}\]
as required.

\section*{Acknowledgements}
The authors were supported by Austrian Science Fund FWF Project P 30405-N32. We are especially grateful to Oliver Roche-Newton who pushed us to improve our results. We also thank Misha Rudnev for helpful suggestions.

\Addresses
\newpage
\appendix
\section*{Appendix - Proof of Proposition~\ref{p:decomp}}
We present the proof of Proposition~\ref{p:decomp} in this section. 
\begin{proposition*}
Let $k> 1$ be a given {real number}. Fix $c_1 \in (0,1)$, and let $A, V$ be finite subsets of $\mathbb R$. Then there are sets $B, C$ with $C \subseteq B\subseteq A$ and $|C|\gtrsim_{k,c_1} |B|\geq (1-c_1) |A|$, such that the following property holds: there is a number $1\leq t\leq |B|$ and a set $D_t=\{x\in B-{V}:\, t\leq r_{B-{V}}(x)<2t\}$ such that 
\[
|D_t|t^k\leq E_k^+(B,{V})  \leq 2^k|D_t|t^k\log(|B|)\]
and
\[
r_{P+{V}}(c) \in \left[\frac{|D_t|t}{2^{k+1}|B|}, \frac{2|D_t|t}{|B|}\frac{k2^k \log^2|A|}{c_1} \right] \]
for any $c\in C$.
\end{proposition*}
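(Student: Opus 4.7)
Proof proposal. The plan is to construct $B$ and $C$ by an iterative pruning in the spirit of Xue's regularisation (an asymmetric refinement of Lemma~\ref{lem:regu}), with both sets emerging from a single dyadic decomposition at the terminal iteration.

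For any subset $X\subseteq A$, I would first dyadically decompose $E_k^+(X,V)$: the support of $r_{X-V}$ partitions into the $O(\log|X|)$ levels $D_i^X=\{y:2^i\leq r_{X-V}(y)<2^{i+1}\}$, and pigeonhole produces a scale $t=t(X)=2^{i_\star}$ and the set $D(X):=D_{i_\star}^X$ satisfying $|D(X)|\,t(X)^k\geq E_k^+(X,V)/(2^k\log|X|)$. This immediately yields the top pair of inequalities in the conclusion once we set $B=X$. The trivial identity
\[
\sum_{c\in X} r_{D(X)+V}(c) \;=\; \sum_{d\in D(X)} r_{X-V}(d) \;\in\; [\,|D(X)|\,t(X),\ 2|D(X)|\,t(X))
\]
shows that the average of $r_{D(X)+V}(c)$ over $c\in X$ is of order $|D(X)|t(X)/|X|$, and Markov's inequality immediately controls the upper tail: with $M=k\,2^k\log^2|A|/c_1$, the set $U(X)=\{c: r_{D(X)+V}(c)>2M|D(X)|t(X)/|X|\}$ has $|U(X)|\leq|X|/M$, negligible compared with $|X|$.

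The real difficulty is the lower tail $L(X)=\{c: r_{D(X)+V}(c)<|D(X)|t(X)/(2^{k+1}|X|)\}$, which need not be small and whose removal alters the dyadic pair $(t,D)$, so a single pass is not enough. I would handle this by iterating $X_0=A$, $X_{j+1}=X_j\setminus L(X_j)$, but capping the per-step loss at $\epsilon|X_j|$ with $\epsilon=c_1/\log^2|A|$ — that is, only the $\lfloor\epsilon|X_j|\rfloor$ elements of $L(X_j)$ carrying the smallest $r_{D(X_j)+V}$-values are actually deleted when $|L(X_j)|>\epsilon|X_j|$ — so that the cumulative loss over $O(\log|A|)$ iterations stays inside $c_1|A|$. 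Termination is forced in the style of Lemma~\ref{lem:regu}, but, as the main text emphasises, with the stopping rule phrased in terms of the \emph{support} of the $k$th energy rather than the energy itself: once a further application of the capped rule fails to shrink the dyadic support $|D(X_j)|$ by more than a constant factor, we stop, set $B:=X_j$, and take $C:=B\setminus(L(B)\cup U(B))$. The stopping condition then forces $|L(B)|\leq |B|/2$, so $|C|\gtrsim|B|\geq(1-c_1)|A|$, and the required two-sided range for $r_{D_t+V}(c)$ on $c\in C$ holds by construction.

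The main obstacle is the termination-and-loss bookkeeping. Unlike in Lemma~\ref{lem:regu}, the natural monotone quantity to track is not $E_k^+$ itself — which one in fact wants to preserve up to constants, so that the scale $t$ persists between iterations — but the dyadic support $|D(X_j)|$. One must show this support is bounded below by a polynomial quantity, that it must roughly stabilise within $O(\log|A|)$ iterations, and that the per-iteration cap $\epsilon=c_1/\log^2|A|$ cleanly matches the $\log^2|A|/c_1$ factor appearing in the final upper bound on $r_{D_t+V}(c)$. This is the technical core of the argument, and is what motivates its relegation to the appendix; the explicit constants $2^{k+1}$ and $k\,2^k\log^2|A|/c_1$ in the statement should then fall out of this bookkeeping together with the Markov and pigeonhole steps above.
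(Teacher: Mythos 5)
Your sketch captures the outer shape of Xue's regularisation (dyadic decomposition, a popularity filter per step, a cumulative-loss budget of $c_1|A|$, a Markov step for one of the two tails), but the roles of the upper and lower tails are inverted in a way that breaks the argument.

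The paper's deterministic rule $\mathcal{R}_\varepsilon(\tilde A)$ removes the \emph{upper} tail $\{a:\,r_{D_t+V}(a)> |\mathcal{P}_{\tilde A}|/(\varepsilon|\tilde A|)\}$ — that is precisely the Markov step, executed at every iteration with loss at most $\varepsilon|\tilde A|$ — and the termination condition is a mass-preservation condition: $|G_i|\geq 2^{-k}|\mathcal{P}_{A_i}|$, where $G_i$ is the subset of $\mathcal{P}_{A_i}$ with abscissa surviving the rule. The reason this iteration terminates is that if $|G_i|<2^{-k}|\mathcal{P}_{A_i}|$ then the deleted abscissas carry energy $\gtrsim 2^{-k}\log^{-1}|A|\cdot\E_k^+(A_i,V)$, so $\E_k^+(A_i,V)$ decays geometrically; with $I\sim c_1/\varepsilon \sim \log^2|A|$ steps this would drop below the trivial bound $|A_I||V|\geq(1-c_1)|A||V|$, a contradiction. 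The \emph{lower} tail is handled exactly once, after termination: the mass bound $|G_N|\geq 2^{-k}|\mathcal{P}_B|$ shows that deleting $c\in C'$ with $r_{D_t+V}(c)<|\mathcal{P}_B|/(2^{k+1}|B|)$ removes at most half of $|G_N|$, and then the per-element upper bound on $C'$ forces $|C|\geq \varepsilon 2^{-(k+1)}|B|$. You have swapped these: you iterate the lower tail and do Markov once. This has two fatal consequences. First, removing lower-tail elements destroys essentially no energy (they carry little mass by definition), so there is no monotone quantity that drives your iteration; your proposed stopping rule --- ``stop once the capped deletion fails to shrink $|D(X_j)|$ by a constant factor'' --- fires immediately, since deleting $\leq\epsilon|X_j|=c_1|X_j|/\log^2|A|$ elements cannot change $r_{X_j-V}$ or the dyadic level supporting the $k$th energy by more than a negligible amount. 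You would then stop with $B\approx A$ and no control on $|L(B)|$, which can be a positive proportion of $B$ (the mass of $\mathcal{P}_B$ is happily concentrated on a thin upper slice). Second, your claimed conclusion ``termination forces $|L(B)|\leq|B|/2$, hence $|C|\gtrsim|B|$'' is both unjustified and strictly stronger than what the paper proves, which is only $|C|\gtrsim_{k,c_1}|B|/\log^2|A|$ — precisely why $\varepsilon$ appears in the final count. The step you defer to ``the technical core'' is the part that does not exist in the form you describe; the actual mechanism requires the rule to prune the top, not the bottom.
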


Here, the subscripts in $\gtrsim_k$, $\gg_k$ or $\approx_k$ means that the implied constant may depend on $k$.

\begin{proof}
The proof of this lemma is two-fold: first we will refine the set $A$ iteratively according to a deterministic rule $\mathcal{R}_\varepsilon$ for a fixed $\varepsilon>0$. This yields a set $B\subseteq A$ so that $|B|\geq (1-\varepsilon)|A|$. We then choose  $C\subseteq \mathcal{R}_\varepsilon(B)$ and argue that the iteration process guarantees that $C$ has the desired properties.

Let us first describe the deterministic rule applied to a dummy set $\tilde A$. 
We begin by dyadically decomposing $\tilde A-V$ to obtain $D_t\subseteq \tilde A -V$ and a number $1\leq t \leq \min(|\tilde A|,|V|)$ so that $|D_t|t^k \leq \E_k^+(\tilde A,V) \leq \log(\min(|\tilde A|, |V|)) |D_t|t^k$ and $r_{\tilde A-V}(d)\in [t,2t)$ for any $d\in D_t$. That is, we perform a dyadic decomposition argument according to the $k$th energy. We also define the set 
\[\mathcal{P}_{\tilde A}:= \left\{(a,v)\in \tilde A\times V \colon a-v\in D_t \right\},\]
which is the set of points of $\tilde A\times V$ supported on lines with slope in $D_t$. By construction, we have that $|\mathcal{P}_{\tilde A}| \in[|D_t|t,2|D_t|t )$. Now, with $\varepsilon > 0$ a parameter, define  a subset 
    \[
    \mathcal{R}_\varepsilon(\tilde A) :=\left\{a\in \tilde A: r_{D_t+V}(a)\leq  \frac{|\mathcal{P}_{\tilde A}|}{ \varepsilon |\tilde A|}\right\}\,.
    \]
 Clearly this rule is deterministic; we claim that $|\mathcal{R}_\varepsilon (\tilde A)|\geq (1-\varepsilon) |\tilde A|$. Indeed, writing $\tilde A' = \mathcal{R}_\varepsilon(\tilde A)$, we have
 $$|\tilde A\setminus \tilde A'| \frac{|\mathcal{P}_{\tilde A}|}{ \varepsilon |\tilde A|} \leq |\{(a,v)\in \mathcal{P}_{\tilde A}: a\notin \tilde A'\}| \leq |\mathcal{P}_{\tilde A}|$$ and it follows that $|\tilde A'|>(1-\varepsilon)|\tilde A|$.

We now describe the iteration scheme in the proof using the notation introduced above: let $A_0 = A$ and for $i \geq 0$ define 
\[G_{i}:= \mathcal{P}_{A_i} ~\cap~ (\mathcal{R}_\varepsilon(A_i)\times V) = \{(a,v) \in \mathcal{R}_\varepsilon(A_i)\times V \colon a-v \in D_t(A_i)\}
\]
where $D_t(A_i)\subseteq A_i-V$ is the set supporting the $k$th energy $\E_k^+(A_i,V)$.

If $|G_i|<2^{-k}|\mathcal{P}_{A_i}|$ then set $A_{i+1} = \mathcal{R}_\varepsilon(A_i)$. Otherwise we terminate the process and set $B = A_i$ and $C' = \mathcal{R}_\varepsilon(A_i)$.

Note that $|A_i| \geq (1-\varepsilon)^i |A|\geq (1-i\varepsilon)|A|$.

We remark that the stopping condition of this algorithmic procedure differs from the stopping condition in Proposition~\ref{prop: energy general}. 

We claim that the this process must terminate in fewer than $I = c_1\varepsilon^{-1}$ steps. For ease of notation, let us suppose that $c_1\varepsilon^{-1}\in \mathbb{N}$.

Indeed, suppose for contradiction that we are in the $I$th step of the process. Then we have a set $A_I\subseteq A$ so that $|A_I|\geq (1-c_1)|A|$ and
for each $0\leq i\leq I$ we have
$|G_i|< 2^{-k} |\mathcal{P}_{A_i}|$.

Let us write $D_i$ to mean $D_t(A_i)$ - that is $\mathcal{P}_{A_i} = \{(a,v)\in A_i\times V\colon a-v\in D_i\}$. Similarly, let us write the $t$ corresponding to $D_i$ as $t_i$ so that $\E_k^+(A_i,V) \in [ |D_i|t_i^k, 2^k|D_i|t_i^k \log(|A_i|) )$.

Since we have not terminated the iteration procedure, we obtain for each $0\leq i \leq I - 1$ that 
\begin{align*}
\sum\limits_{x\in D_i} r_{\mathcal{R}_\varepsilon(A_i) -V}^k(x) &  \leq (2t_i)^{k-1}
\left |\mathcal{P}_{A_i}~\cap~(\mathcal{R}_\varepsilon(A_i) \times V)\right| = (2t_i)^{k-1} |G_i|  \\
&< 2^{-k} (2t_i)^{k-1} |\mathcal{P}_{A_i}| = t_i^{k-1}|\mathcal{P}_{A_i}|/2
\end{align*}

Let us now consider the number of terms in the support of the energy that we discard during the iteration process:
\begin{align*}
|\{
\left((a_1,v_1), \dots, (a_k,v_k)\right)\in (\mathcal{P}_{A_i}\setminus G_i)^k &\colon a_1 - v_1 = \dots = a_k - v_k  
\}| = \sum_{x\in D_i}r_{A_i-V}^k(x) - \sum_{x\in D_i}r_{\mathcal{R}_\varepsilon(A_i) - V}^k(x) \\
& \geq t_i^{k-1}|\mathcal{P}_{A_i}|/2  \geq  |D_i|t_i^k \geq 2^{-k} \E_k^+(A_i,V) \log(|A_i|)^{-1}
\end{align*}

We emphasise that any discarded energy-term $((a_1,{v_1}),\ldots,(a_k,{v_k}))$ has at least one component $(a_j,{v_j})$ with abscissa not in $\mathcal{R}_\varepsilon(A_i)$. So the energy-terms counted by $E_k^+(\mathcal{R}_\varepsilon(A_i), {V})$ all remain. We deduce that
\begin{align*}
E_k^+(A_{i+1},{V}) &= E_k^+(\mathcal{R}_\varepsilon(A_i), {V})\\
&\leq E_k^+( A_i, {V}) - 
|\left\{((a_1,{v_1}),\ldots,(a_k,{v_k}))\in (\mathcal{P}_{A_i}\setminus G_i)^k:\, a_1-{v_1}=\ldots =a_k-{v_k}\right\}|\\
&\leq (1 - 2^{-k} \log(|A_i|)^{-1}) \E_k^+(A_i,V)\\
&\leq (1 - 2^{-k} \log(|A|)^{-1}) \E_k^+(A_i,V)
\end{align*}
for all $0\leq i\leq I-1$. 

Using the trivial bounds $|A||V|\leq \E_k^+(A,V)\leq |A|^k|V|$ we obtain the bound
\[
(1-c_1)|A||V| \leq |A_I||V|\leq
\E_k^+(A_I,V)\]
and similarly 
\[
\E_k^+(A_I,V)\leq (1 - 2^{-k} \log(|A|)^{-1}) \E_k^+(A_{I-1},V) \leq  (1 - 2^{-k} \log(|A|)^{-1})^I \E_k^+(A,V) \leq (1 - 2^{-k} \log(|A|)^{-1})^I |A|^k|V|\,.
\]

Thus we have the estimate
\[
(1-c_1) < (1 - 2^{-k} \log(|A|)^{-1})^I |A|^{k-1} \leq e^\frac{-c_1}{\varepsilon 2^{k}\log|A|} |A|^{k-1} 
= e^{\frac{-c_1}{\varepsilon 2^{k}\log|A|} + \ln(2) (k-1)\log|A|}
\]

Let us choose $\varepsilon$ so that
\[
(1-c_1)  =  e^{\frac{-c_1}{\varepsilon 2^{k}\log|A|} + \ln(2) (k-1)\log|A|}
\]
to obtain a contradiction. 

That is , let us take 
\[
\varepsilon = \frac{c_1 }{2^k \ln(2) (k-1)\log^2|A| - 2^k\log|A|\ln(1-c_1)} \,.\]

With this choice of $\varepsilon$, the process must terminate in at most $I = c_1 \varepsilon^{-1}$ steps. 

Having argued that this algorithmic procedure must indeed terminate after say $N\leq I$ steps , let us set $B =A_N$   and $C' = \mathcal{R}_\varepsilon(B)$. We have that $|B|\geq (1-c_1)|A|$.
Set
\[
C = \{x\in C^\prime:\, r_{D_I+{V}}(a)\geq |\mathcal{P}_{B}|/(2^{k+1}|B|)\}.
\]

Then 
\[
|\{(a,{v})\in \mathcal{P}_{B}:\, a\in C^\prime\setminus C\}|  \leq \frac{|\mathcal{P}_{B}|}{2^{k+1}|B|} |C^\prime| \leq \frac{|\mathcal{P}_{B}|}{2^{k+1}}.
\]

Thus we obtain 
\[
|\{(a,{v})\in \mathcal{P}_{B}:\, a\in C^\prime\setminus C\}|  \geq  |G_N| -  \frac{|\mathcal{P}_{B}|}{2^{k+1}} \geq \frac{|\mathcal{P}_{B}|}{2^k} - \frac{|\mathcal{P}_{B}|}{2^{k+1}} = \frac{|\mathcal{P}_{B}|}{2^{k+1}}\,,
\]
where the second inequality is a consequence of the termination condition. 

On the other hand, since $C \subseteq C^\prime$, recalling the definition of $C^\prime$, we have
\[
|\{(a,{v})\in \mathcal{P}_{B}:\, a\in C\}| \leq |C|\frac{|\mathcal{P}_{B}|}{\varepsilon |B|}.
\]

Hence $|C|\geq \varepsilon2^{-(k+1)}|B|$. With the explicit choice of $\varepsilon$ together with the bound on $|B|$ this means that 
\[
 |C| 
 \geq \frac{c_1(1-c_1) }{2^{2k+1} \ln(2) (k-1)\log^2|A| - 2^{2k+1}\log|A|\ln(1-c_1)}|A|
 \geq \frac{c_1(1-c_1) }{2^{2k+1} \ln(2) (k-1)}\frac{|A|}{\log^2|A|}
\]
%

Note that for any $c\in C$ we have
\[
r_{P+{V}}(c) \in \left[\frac{|D_t|t}{2^{k+1}|B|}, \frac{2|D_t|t}{|B|}\frac{2^k \ln(2) (k-1)\log^2|A| - 2^k\log|A|\ln(1-c_1)}{c_1} \right] \,.\]
The upper bound is certainly less than 
\[
 \frac{2|D_t|t}{|B|}\frac{k2^k \log^2|A|}{c_1}\,,
\]
the bound that appears in the statement of the proposition. This completes the proof.

\end{proof}

\end{document}